\documentclass[10pt,reqno,final]{amsart}
\usepackage{epsfig,amssymb,amsmath,version}
\usepackage{amssymb,version,graphicx,fancybox,mathrsfs,multirow}
\usepackage{epstopdf}
\usepackage{url,hyperref}
\usepackage[notcite,notref]{showkeys}
\usepackage{bm}
\usepackage{subfigure}
\usepackage{color,xcolor}
\usepackage{cases}
\usepackage{mathtools}
\usepackage{extarrows}
\usepackage{bbm}

\textheight=21.6cm
\textwidth=14.8cm
\setlength{\oddsidemargin}{0.8cm}
\setlength{\evensidemargin}{0.8cm}
\catcode`\@=11 \theoremstyle{plain}
\@addtoreset{equation}{section}   

\@addtoreset{figure}{section}
\renewcommand\thefigure{\thesection.\@arabic\c@figure}
\renewcommand{\thefigure}{\arabic{section}.\arabic{figure}}
\newtheorem{thm}{\bf Theorem}

\newenvironment{theorem}{\begin{thm}} {\end{thm}}
\newtheorem{cor}{\bf Corollary}

\newtheorem{lmm}{\bf Lemma}

\newenvironment{lemma}{\begin{lmm}}{\end{lmm}}

\theoremstyle{example}
\newtheorem{example}{Example}[section]

\theoremstyle{remark}
\newtheorem{rem}{\bf Remark}[section]
\theoremstyle{definition}

\numberwithin{table}{section}

\def \af {\alpha}

\newcommand{\bs}[1]{\boldsymbol{#1}}

\renewcommand \wedge \times

\begin{document}
	\bibliographystyle{plain}
	\graphicspath{{./figures/}}

	\title[Cole-Cole media
	] {An accurate  spectral method for Maxwell equations in Cole-Cole dispersive media}
		\author[
	C. Huang   \; $\&$ \; L. Wang
	]{
		\;\; Can Huang${}^1$ \;\; and\;\; Li-Lian Wang${}^{2}$
		}
	
	\thanks{${}^1$School of Mathematical Sciences  and Fujian Provincial Key Laboratory on Mathematical Modeling \& High Performance Scientific Computing, Xiamen University, Fujian 361005, China. The research of this author is supported by National Natural Science Foundation of China  (No. 11401500, 91630204). \\
		\indent ${}^{2}$Division of Mathematical Sciences, School of Physical
		and Mathematical Sciences, Nanyang Technological University,
		637371, Singapore. The research of this author is partially supported by Singapore MOE AcRF Tier 1 Grant (RG 15/12). \\
		\indent The authors would like to thank both universities for hosting their mutual visits to complete this work. 
			}
	
\begin{abstract}
In this paper, we propose an accurate numerical means built upon a spectral-Galerkin method in spatial discretization and an enriched multi-step spectral-collocation approach in temporal direction,  for Maxwell equations in Cole-Cole dispersive media in two-dimensional setting.  Our starting point is to  derive a new model involving only one unknown field from the original model with three unknown fields: electric, magnetic fields and  the induced electric polarisation (described by a global temporal convolution of the electric field). 
This results in a second-order integral-differential equation with a weakly singular  integral kernel  expressed by  the Mittag-Lefler (ML) function. The most interesting but challenging issue resides in how to efficiently  deal with the singularity in time induced by the ML function which is an infinite series of singular power functions  with different nature.  
  With this in mind,  we  introduce a spectral-Galerkin method using Fourier-like basis functions for spatial discretization, leading to  a sequence of  decoupled temporal integral-differential equations (IDE) with  the same weakly singular kernel involving the ML function as the original two-dimensional problem.  With a careful study of  the regularity of IDE, we incorporate several leading singular terms into the 
 numerical scheme and approximate much regular part of the solution.  Then we solve to IDE by a  multi-step well-conditioned collocation scheme together with mapping technique to increase the accuracy and enhance the resolution.  We show such an enriched collocation method is  convergent and accurate. 
\end{abstract}
	
\keywords{Cole-Cole media, dispersive, spectral method, non-polynomial approximation}

 \subjclass[2010]{65N35, 65E05, 65N12,  41A10, 41A25, 41A30, 41A58}	
	
\maketitle


\section{Introduction}

\setcounter{equation}{0}
\setcounter{lmm}{0}
\setcounter{thm}{0}

In electromagnetism, if the electric  permittivity or magnetic  permeability
depends on the wave frequency, then the medium is called a dispersive medium.
The typical  models that characterise such a dependence include the  Drude mode \cite{Ziolkowski03, ZiolkowskiH01} and the Lorenz model \cite{ShelbySNS01, SmithK00}.  The Cole-Cole (C-C) dispersive model, distinguishing itself by the nonlocal feature, has been successfully  applied to fit experimental dispersion and absorption for
a considerable number of liquids and dielectrics
\cite{ColeC41}. Such a model can be expressed by the empirical formula (cf. \cite{ColeC41}): 
\begin{equation}
\epsilon(\omega)=\epsilon_0\bigg(\epsilon_{\infty}+\frac{\epsilon_s-\epsilon_{\infty}}
{1+({\rm i}\,\omega\tau)^{\alpha}}\bigg), \quad  0<\alpha\leq 1,
\end{equation}
where $\tau, \epsilon_0, \epsilon_s, \epsilon_\infty$ are all given physics constants.
Here,  $\tau$ is the central relaxation time of the material model, $\epsilon_0$ is the permittivity of vacuum, and  $\epsilon_{s}$ and $\epsilon_\infty$  are respectively the zero- and infinite-frequency limits of the relative permittivity satisfying $\epsilon_s>\epsilon_{\infty}\geq 1$. In particular, the model with $\alpha = 1$ leads to the classical Debye dielectric model, or exponential dielectric relaxation. 

Since the C-C relaxation model has many applications in diverse fields, such as soil characterization \cite{RepoP96},  permittivity of biological tissue \cite{GabrielG96}, and the transient nature of electromagnetic radiation in the human body \cite{CooperMBH00, KimKP07} and among others,  its numerical solution has attracted much attention. 
Intensive studies have been devoted to the finite difference time domain (FDTD) methods (cf. \cite{Causley11, RecanosP10, RecanosY14, Tofighi09, TorrresVJ96}), and the time-domain finite element methods  \cite{BanksBG09, JiaoJ01,LuZC04, WangXZ10,LiHL11}. 
Most of them 
worked on discretization of the Maxwell system directly 
 where the electric field and the induced electric polarisation in the model are interconnected  and globally dependent (see \eqref{Cole-Cole}).  Although this relation can be transformed into a fractional differential equation (see, e.g., \cite{TorrresVJ96, LiHL11, RecanosY14}),  direct discretisation of three fields may result in large degree of freedoms with a heavy burden  of historical dependence in time. 


Different from all aforementioned works, we  formulate  the C-C model  as a  second-order partial integral-differential 
equation (PIDE)  involving only one unknown field, where the integral part has a weakly singular kernel in terms of  the ML function.  
We then place the emphasis on how to efficiently deal with the temporal singular integral with the kernel function as a series of singular functions in different fractional powers. Without loss of generality, we consider the plane wave geometry of the C-C model and reduce magnetic and electric field vectors to scalar field quantities by polarisation, and restrict our attention to the two-dimensional PIDE.  
  We then employ a spectral-Galerkin method  using Fourier-like basis functions  in space (cf. \cite{Liu.S96,Liu.T01,She.W07b}), and the model 
 boils down to  a sequence of decoupled temporal  IDE with the same type of singular integrals. 
   As such, unlike the existing  methods, we work with a model  with the  minimum number of unknowns, so the computational cost  can be enormously reduced.   

We propose a well-conditioned multi-step collocation method for solving the temporal IDE, which is enriched by incorporating  a few leading singular terms through a delicate regularity analysis, and  integrated with   a mapping technique  (cf.  \cite{WangS05})    for treating the singular integral and nearly singular integrals  in the first subinterval. The well-conditioning is achieved by writing the IDE in a first-order damped Hamitonian system and using  the Birkhoff-Lagrange interpolating basis  (cf. \cite{WangSZ14}), so the proposed method possesses a long time stability. 
It is noteworthy that the integral operator in our setting involving the ML function as the singular kernel.  
Such a kernel  
is distinct from the usual weakly singular kernel,  such as 
 $t^{\alpha-1},\, 0<\alpha<1,$ in terms of the singular behaviors.  We notice  that many fast algorithms, stemming from the celebrated Fast Multipole method, have been recently  proposed for the (Riemann-Liouville/Caputo) fractional differential equations (see, e.g., \cite{LiR10, McLean12, JiangZZZ17}).  However, it appears that the extension of  these algorithms  to our  case  is nontrivial and largely open due to the completely different nature of the singular kernel. 


The rest of the paper is organised as follows. In Section \ref{Sect2}, we  formulate our new model and present a semi-discretised  scheme for  the problem of interest. In Section \ref{Sect3}, we tackle 
the challenges of the  temporal IDE  obtained from the previous section, and introduce effective numerical techniques to surmount the obstacles. We also present various numerical results to illustrate various perspectives of the proposed method.  
We then conclude with discussions and some future work regarding the C-C model in Section \ref{Sect4}.

\section{Formulation of the model and a semi-discretised scheme}\label{Sect2}
\setcounter{equation}{0}
\setcounter{lmm}{0}
\setcounter{thm}{0}

In this section, we derive a new model from the Maxwell system in Cole-Cole media involving three vector fields, and introduce 
a semi-discretised scheme  for  the problem of interest. 
\subsection{Maxwell's equations in Cole-Cole media}

The time-domain Maxwell's equations in a Cole-Cole media take the form (cf. \cite{TorrresVJ96,LiHL11}): 
\begin{subequations}\label{22eqn}
\begin{align}
\epsilon_0\epsilon_{\infty}\frac{\partial {\bs E}}{\partial t}&=\nabla\times{\bs H}-\frac{\partial {\bs P}}{\partial t} \quad\;\;  {\rm in}\;\; \Omega\times(0, T],\label{cc1}\\
\mu_0\frac{\partial {\bs H}}{\partial t}&=-\nabla\times{\bs E} \qquad\qquad  {\rm in} \;\; \Omega\times(0, T],\label{cc2}
\end{align}
\end{subequations}
where $\Omega$ is a bounded domain in ${\mathbb R}^3$ with a Lipschitz boundary, and  ${\bs P}({\bs x}, t)$ is the induced electric polarization 
\begin{equation}
\label{Cole-Cole}
{\bs P}(\bs x, t)=\int_0^t \xi_{\alpha}(t-s){\bs E}(\bs x,s)\,ds,\quad \xi_{\alpha}(t):=\mathscr{L}^{-1}\Big\{\frac{\epsilon_0(\epsilon_s-\epsilon_{\infty})}
{1+(s\tau)^{\alpha}}\Big\}.
\end{equation}
 Here, $\xi_{\alpha}$ is  the time-domain susceptibility kernel which involves the inverse Laplace transform  $\mathscr{L}^{-1}$. Note that  ${\bs P}(\bs x, 0)=0$ is evident from \eqref{Cole-Cole}.
Here, we supplement \eqref{22eqn}-\eqref{Cole-Cole} with the initial condition
\begin{equation}\label{CC:i}
\bs E(\bs x,0)=\bs E_0(\bs x),\quad \bs H(\bs x,0)=\bs H_0(\bs x) \quad {\rm in}\;\; \Omega, 
\end{equation}
and a perfect conducting boundary condition
\begin{equation}\label{CC:b}
\bs n\times \bs E=\bs 0\quad {\rm at}\;\;
 \partial \Omega \times (0,T).
\end{equation}



It is seen that the above Maxwell's system contains three unknown vector fields.
It is computationally beneficial to eliminate some unknowns. In this paper, we work on the model with  one unknown field. More precisely, we denote
\begin{equation}\label{constA}
a={\frac{1}{\mu_0\epsilon_0\epsilon_\infty}}, \quad b=\frac{\epsilon_s-\epsilon_\infty}{\mu_0\tau^\alpha\epsilon_0\epsilon_\infty^2}, \quad \lambda=\frac{\epsilon_s}{\epsilon_\infty\tau^\alpha},
\end{equation}
and  define
\begin{equation}\label{AuxiF}
{\bs {\mathcal E}}=\epsilon_0\epsilon_{\infty}{\bs E}+\bs P.
\end{equation}
Then we can derive from \eqref{22eqn}-\eqref{Cole-Cole} the  integro-differential equation:
\begin{equation}\label{modeqnA}
\begin{split}
\frac{\partial^2 \bs {\mathcal E}}{\partial t^2}=-a\nabla\times\nabla\times \bs {\mathcal E}+b\int_0^t(t-s)^{\alpha-1}E_{\alpha,\alpha}(-\lambda(t-s)^\alpha)\nabla\times\nabla\times \bs {\mathcal E}(\bs x,s)ds,
\end{split}
\end{equation}
for $0<\alpha<1,$ where $E_{\alpha,\beta}(t)$
 is the standard Mittag-Leffler (ML) function defined by (cf. \cite{GorenfloK14}):
 \begin{equation}\label{MLfuncA}
E_{\alpha,\beta}(z)=\sum\limits_{k=0}^\infty \frac{z^k}{\Gamma(k\alpha+\beta)}.
\end{equation}
Now, we show the derivation of \eqref{modeqnA}-\eqref{MLfuncA}. Firstly, taking
 derivative with respect to $t$ for \eqref{cc1} and $\nabla\times $ for \eqref{cc2}, we eliminate $\bs H$ and obtain
\begin{equation}
\label{wave}
\epsilon_0\epsilon_{\infty}\frac{\partial^2 {\bs E}}{\partial t^2}=-\frac{1}{\mu_0}\nabla\times\nabla\times {\bs E}-\frac{\partial^2 {\bs P}}{\partial t^2}. 
\end{equation}
Secondly, taking the Laplace transform on both sides of  \eqref{Cole-Cole} leads to
\begin{equation}\label{eq:FDE1}
{\widehat{\bs P}}(\bs x,s)=\frac{\epsilon_0(\epsilon_s-\epsilon_{\infty})}{1+(s\tau)^\alpha}{\widehat{\bs E}}(\bs x, s),
\end{equation}
where the notation $\widehat{\bs W}$ stands for  the Laplace transform of the field $\bs W$.  Then a direct calculation from \eqref{AuxiF} and \eqref{eq:FDE1} yields
 \begin{equation}\label{newderi}
 \begin{split}
\widehat{\bs E}&= \frac{1+(s\tau)^\alpha}{\epsilon_0\epsilon_\infty(1+(s\tau)^\alpha)+\epsilon_0(\epsilon_s-\epsilon_\infty)}
\widehat{\bs {\mathcal E}}=\frac{1}{\epsilon_0\epsilon_\infty}
\widehat{\bs {\mathcal E}}-\frac{\epsilon_s-\epsilon_\infty}{\epsilon_0\epsilon_\infty^2\tau^\alpha}\frac{1}{s^\alpha+\epsilon_s/(\epsilon_\infty\tau^\alpha)}
\widehat{\bs {\mathcal E}}.
\end{split}
\end{equation}
Recall the formula of the inverse Laplace transform \cite[p. 84]{GorenfloK14}:
\begin{equation}\label{invLap}
\mathcal{L}^{-1}\Big(\frac{1}{s^\alpha+\lambda}\Big)= t^{\alpha-1}E_{\alpha,\alpha}(-\lambda t^\alpha), \quad{\rm if}\;\; |\lambda/s^\alpha|<1.
\end{equation}
Applying the inverse Laplace transform on both sides of \eqref{newderi} and using \eqref{invLap}, we obtain
\begin{equation}\label{eq:EF}
\bs E=\frac{1}{\epsilon_0\epsilon_\infty}\bs {\mathcal E}-\frac{\epsilon_s-\epsilon_\infty}{\epsilon_0\epsilon_\infty^2\tau^\alpha}\int_0^t (t-s)^{\alpha-1}E_{\alpha,\alpha}\Big(-\frac{\epsilon_s}{\epsilon_\infty\tau^\alpha}(t-s)^\alpha\Big) \bs {\mathcal E} (\bs x, s)ds.
\end{equation}
Substituting \eqref{eq:EF} and $\bs P={\bs {\mathcal E}}-\epsilon_0\epsilon_{\infty}{\bs E}$ into \eqref{wave}
leads to \eqref{modeqnA}.

With the substitution \eqref{AuxiF}, we can determine the initial and boundary conditions of the new known field
$\bs {\mathcal E}$ as follows.  By \eqref{Cole-Cole} and \eqref{CC:b}, we have $\bs n\times \bs P=\bs 0$ at the boundary, so
\begin{equation}\label{pecbnd}
\bs n\times \bs {\mathcal E}=\bs 0 \quad   {\rm at}\;\;
 \partial \Omega \times (0,T),
\end{equation}
and similarly, we can derive the initial conditions from \eqref{cc1}, \eqref{CC:i} and \eqref{AuxiF} as follows
\begin{equation}\label{inicondA}
\bs {\mathcal{E}}(\bs x, 0)=\epsilon_0\epsilon_\infty \bs E_0(\bs x):=\bs {\mathcal E}_0(\bs x), \quad \bs {\mathcal{E}}_t(\bs x, 0)=\nabla\times \bs H_0(\bs x)
:=\bs {\mathcal E}_1(\bs x)\quad {\rm in}\;\; \Omega
\end{equation}

In summary, with the aid of the auxiliary field $\bs {\mathcal E}$ in \eqref{AuxiF}, we can  reformulate  the Cole-Cole model \eqref{22eqn}-\eqref{CC:b} as the following integro-differential model with one unknown field:
\begin{equation}\label{Reformu:CC}
\begin{cases}
\dfrac{\partial^2 \bs {\mathcal E}}{\partial t^2}=-a\nabla\times\nabla\times \bs {\mathcal E}+b\displaystyle\int_0^te_{\alpha,\alpha}(-\lambda(t-s)^\alpha) \nabla\times\nabla\times \bs {\mathcal E}\,ds
\; & {\rm in}\;\; \Omega,\; t\in (0,T],\\[6pt]
\bs n\times \bs {\mathcal E}|_{\partial\Omega}=0\ \ \  & t\in (0,T],\\[4pt]
\bs {\mathcal{E}}(\bs x, 0)=\bs {\mathcal E}_0(\bs x), \quad \bs {\mathcal{E}}_t(\bs x, 0)
=\bs {\mathcal E}_1(\bs x)  \quad & {\rm in}\  \Omega,
\end{cases}
\end{equation}
where $0<\alpha<1,$ $a, b,\lambda$ are given by \eqref{constA}, and
\begin{equation}\label{eafaf}
e_{\alpha,\alpha}(-\lambda t^\alpha)=t^{\alpha-1}E_{\alpha,\alpha}(-\lambda t^\alpha).
\end{equation}
Note that we can recover the electric field from $\bs{\mathcal E}$  by \eqref{eq:EF}:
\begin{equation}\label{eq:EF2}
\bs E(\bs x, t)=a \mu_0\, \bs {\mathcal E}(\bs x,t)-b\mu_0 \int_0^t e_{\alpha,\alpha}(-\lambda(t-s)^\alpha) \bs {\mathcal E} (\bs x, s)ds.
\end{equation}

\subsection{Two-dimensional Cole-Cole model}
It is seen from \eqref{modeqnA} that the most interesting but challenging  issue lies in the treatment of the singular integral
in time. Without loss of generality, we consider the transverse electric polarization with
$\bs E=(0,0,E_z(x,y))',$ so we have $\bs{\mathcal{E}}=(0,0,u(x,y))'.$
Then we have the reduced model of \eqref{modeqnA}:
%
%
\begin{equation}\label{Prob:CC}
\begin{cases}
\displaystyle{\frac{\partial^2 u}{\partial t^2}=a\Delta u-b\int_0^te_{\alpha,\alpha}(-\lambda(t-s)^\alpha)\Delta u(\bs x, s)ds}\; & {\rm in}\;\; \Omega,\; t\in (0,T],\\[6pt]
 u(\bs x, t)|_{\partial\Omega}=0\ \ \  & {\rm for}\;\; t\in (0,T],\\[4pt]
 u(\bs x, 0)=u_0(\bs x), \quad  u_t(\bs x, 0)= u_1(\bs x)  \quad & {\rm in}\  \Omega.
\end{cases}
\end{equation}



The existence and uniqueness of a weak solution to \eqref{Prob:CC} has been investigated in  \cite{LarssonF10} by a semigroup approach and
further explored in \cite{Saed14} using the classic energy argument. However, both studies require $u_0\in H^1(\Omega)$.
In what follows, we shall show $L^2$-{\it a priori} stability with a minimum requirement of the regularity, that is,  $ u_0\in L^2(\Omega)$, which is accomplished by following the spirit  of \cite{Baker76}. 
\begin{theorem}\label{thm:energy}
Let $ u$ be the solution of \eqref{Prob:CC}. If $u_0,u_1\in L^2(\Omega)$ and $a-b/ \lambda \geq 0$, then we have $u\in  L^\infty(0,T; L^2(\Omega)) $ and   the following  estimate
\begin{equation}
 \|u\|_{L^\infty(0,T; L^2(\Omega))} 
 \leq \sqrt 2\|u_0\|_{L^2(\Omega)}+2T\| u_1\|_{L^2(\Omega)}.\label{energy1}
 \end{equation}
\end{theorem}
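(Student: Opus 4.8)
The plan is to follow Baker's device for low‑regularity data in second‑order hyperbolic problems: integrate the equation once in time and test it with the primitive $w(\bs x,t)=\int_0^t u(\bs x,s)\,ds$ of the solution, so that the Laplacian acts on a quantity one time‑derivative smoother than $u$ and no spatial derivative of the data survives on the right‑hand side. I would first establish \eqref{energy1} for smooth solutions (hence for smooth data, for which all the manipulations below are legitimate); the general case $u_0,u_1\in L^2(\Omega)$ then follows by linearity and density, since $(u_0,u_1)\mapsto u$ is linear and \eqref{energy1} says exactly that it extends continuously from $L^2(\Omega)\times L^2(\Omega)$ into $L^\infty(0,T;L^2(\Omega))$.

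\emph{Step 1 (integrate once).} With $w$ as above one has $w_t=u$, $w_{tt}=u_t$, $w(\cdot,0)=0$, $w_t(\cdot,0)=u_0$ and $w(\cdot,t)|_{\partial\Omega}=0$. Integrating the PDE in \eqref{Prob:CC} over $(0,t)$, using $u_t(\cdot,0)=u_1$, Fubini's theorem, and the identity $\frac{d}{d\tau}E_{\alpha,1}(-\lambda\tau^\alpha)=-\lambda\,e_{\alpha,\alpha}(-\lambda\tau^\alpha)$ [recall \eqref{eafaf}], I obtain
\begin{equation}\label{eq:wplan}
w_{tt}(\bs x,t)=u_1(\bs x)+a\,\Delta w(\bs x,t)-b\int_0^t g(t-s)\,\Delta u(\bs x,s)\,ds,\qquad g(\tau):=\tfrac1\lambda\bigl(1-E_{\alpha,1}(-\lambda\tau^\alpha)\bigr).
\end{equation}
Since $0<\alpha<1$, $E_{\alpha,1}(-x)$ is completely monotone on $[0,\infty)$ with $E_{\alpha,1}(0)=1$, so $g$ is nondecreasing, $g(0)=0$ and $0\le g(\tau)\le 1/\lambda$; note that $1/\lambda$ is the total mass $\int_0^\infty e_{\alpha,\alpha}(-\lambda\rho^\alpha)\,d\rho$ of the original kernel, which is what makes $a-b/\lambda\ge0$ the natural coercivity hypothesis. \emph{Step 2 (energy identity).} Taking the $L^2(\Omega)$ inner product of \eqref{eq:wplan} with $w_t=u$, integrating by parts in space (using $u|_{\partial\Omega}=0$), integrating over $t\in(0,\tau)$, then splitting $g=\tfrac1\lambda-h$ with $h(\tau):=\tfrac1\lambda E_{\alpha,1}(-\lambda\tau^\alpha)\ge0$ and writing $\mathcal Q_\tau(\bs\psi):=\int_0^\tau\!\int_0^t h(t-s)\,(\bs\psi(\cdot,s),\bs\psi(\cdot,t))\,ds\,dt$, I would reach
\begin{equation}\label{eq:id2plan}
\tfrac12\|u(\tau)\|^2+\tfrac12\Bigl(a-\tfrac b\lambda\Bigr)\|\nabla w(\tau)\|^2+b\,\mathcal Q_\tau(\nabla u)=\tfrac12\|u_0\|^2+\int_0^\tau(u_1,u)\,dt,
\end{equation}
where $\|\cdot\|,(\cdot,\cdot)$ denote the $L^2(\Omega)$ norm and inner product; here the ``constant'' part $\tfrac1\lambda$ combines with $a\Delta w$ through $\int_0^t\Delta u(\cdot,s)\,ds=\Delta w(\cdot,t)$ to produce the $\tfrac12(a-\tfrac b\lambda)\|\nabla w(\tau)\|^2$ term, and one further space integration by parts converts the $h$‑part into $-b\,\mathcal Q_\tau(\nabla u)$.

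\emph{Step 3 (positivity of the memory term — the crux) and conclusion.} The key point is that the Mittag–Leffler kernel $h$ is of positive type in the Volterra sense, i.e. $\mathcal Q_\tau(\bs\psi)\ge0$ for every $\bs\psi\in C([0,T];L^2(\Omega)^2)$. This holds because $h$ is completely monotone on $(0,\infty)$ — it is the composition of the completely monotone map $x\mapsto\tfrac1\lambda E_{\alpha,1}(-x)$ with the Bernstein function $\tau\mapsto\lambda\tau^\alpha$, $0<\alpha\le1$ — so by Bernstein's theorem $h(\tau)=\int_0^\infty e^{-r\tau}\,d\mu(r)$ for a positive measure $\mu$, and for each fixed $r\ge0$ a short computation gives $\int_0^\tau\!\int_0^t e^{-r(t-s)}(\bs\psi(\cdot,s),\bs\psi(\cdot,t))\,ds\,dt=\tfrac12 e^{-2r\tau}\|\bs\Psi_r(\tau)\|^2+r\int_0^\tau e^{-2rt}\|\bs\Psi_r(t)\|^2\,dt\ge0$ with $\bs\Psi_r(t):=\int_0^t e^{rs}\bs\psi(\cdot,s)\,ds$; integrating in $r$ against $\mu$ preserves the sign. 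Since $b>0$ and $a-b/\lambda\ge0$ by hypothesis, the last two terms on the left of \eqref{eq:id2plan} are nonnegative, hence $\tfrac12\|u(\tau)\|^2\le\tfrac12\|u_0\|^2+\int_0^\tau(u_1,u)\,dt$ for all $\tau\in(0,T]$. Setting $M:=\sup_{[0,T]}\|u(t)\|$, Cauchy–Schwarz gives $\tfrac12\|u(\tau)\|^2\le\tfrac12\|u_0\|^2+T\|u_1\|\,M$; taking the supremum over $\tau$ and applying Young's inequality $T\|u_1\|M\le\tfrac14 M^2+T^2\|u_1\|^2$ yields $M^2\le2\|u_0\|^2+4T^2\|u_1\|^2$, whence $M\le\sqrt2\,\|u_0\|+2T\|u_1\|$, which is \eqref{energy1}; passing to the limit in the smooth approximations finishes the proof.

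\emph{Main obstacle.} The only genuinely delicate step is the positive‑type property of the Mittag–Leffler memory kernel in Step 3, which rests on the complete monotonicity of $E_{\alpha,1}(-\cdot)$ and the attendant Bernstein/Laplace representation (the usual kernel $t^{\alpha-1}$ is handled the same way, but here one must compose with $\lambda\tau^\alpha$). Everything else is a careful but routine bookkeeping of the integrations by parts, which is precisely why I would carry them out first on smooth solutions and invoke density only at the end.
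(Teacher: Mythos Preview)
Your argument is correct and arrives at the same energy inequality as the paper; the overall architecture (Baker's device: pair the equation with a time primitive of $u$ so no spatial derivative of the data survives) is the same, and the final Cauchy--Schwarz/Young step is identical. The genuine difference lies in how the sign of the memory term is obtained. You invoke complete monotonicity of $\tau\mapsto E_{\alpha,1}(-\lambda\tau^\alpha)$ (Pollard's theorem composed with the Bernstein function $\tau\mapsto\lambda\tau^\alpha$), take the Bernstein/Laplace representation $h(\tau)=\int_0^\infty e^{-r\tau}\,d\mu(r)$, and verify positivity of the quadratic form for each exponential kernel---this is the standard ``positive-type kernel'' route in the viscoelasticity literature and immediately generalises to any completely monotone kernel. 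The paper instead proceeds by the Bonnet form of the second mean value theorem: since $s\mapsto E_{\alpha,1}(-\lambda(\xi-s)^\alpha)$ is nonnegative, nondecreasing, and equals $1$ at $s=\xi$, one has $\int_0^\xi E_{\alpha,1}(-\lambda(\xi-s)^\alpha)(\nabla u(s),\nabla\phi(s))\,ds=\int_{\xi_0}^\xi(\nabla u(s),\nabla\phi(s))\,ds$ for some $\xi_0$, and the latter is $\tfrac12\|\int_{\xi_0}^\xi\nabla u\,dt\|^2\ge0$. Your route imports more machinery but is more robust; the paper's route is strikingly elementary---it needs only monotonicity and nonnegativity of $E_{\alpha,1}$, not its complete monotonicity---at the price of being specific to this structure.
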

\begin{proof} 
Setting
\begin{equation}\label{eq:phi}
 \phi(\bs x,t)=\int_t^\xi  u(\bs x,\theta)d\theta, \quad  \xi\in [0,T],
\end{equation}
one verifies easily that
$$
 \phi(\bs x,\xi)=0, \quad  \frac{\partial \phi}{\partial t}(\bs x,t)=-u(\bs x,t).
$$
Multiplying both sides of the first equation in \eqref{Prob:CC} by $\phi(\bs x,t)$  and integrating in space over $\Omega$, we have
\begin{equation}\label{eq:uphi0}
 ( u_{tt}, \phi)=-a(\nabla  u,\nabla \phi)+b\int_0^t e_{\alpha,\alpha}(-\lambda(t-s)^\alpha)(\nabla  u,\nabla\phi)ds.
\end{equation}
Further integrating both sides with respect to $t$ over $(0,\xi)$ leads to
\begin{equation}\label{eq:uphi}
 \int_0^\xi (u_{tt}, \phi)dt=-a \int_0^\xi (\nabla  u,\nabla \phi)dt+b \int_0^\xi \int_0^t e_{\alpha,\alpha}(-\lambda(t-s)^\alpha)(\nabla  u,\nabla\phi)dsdt.
\end{equation}
Next, using integration by parts and the explicit form of $ \phi$ in  \eqref{eq:phi}, we find
\begin{equation}\label{eq:1pos}
\begin{split}
\int_0^\xi ( u_{tt},\phi)dt& =\int_\Omega \Big((u_t\phi)\Big|_0^\xi-\int_0^\xi u_td\phi\Big)dxdy
    \\&=\frac{1}{2}\|u(\cdot,\xi)\|_{L^2(\Omega)}^2-\frac{1}{2}\|u_0\|_{L^2(\Omega)}^2-\int_\Omega u_1(\bs x)\phi(\bs x,0)dxdy,
\end{split}
\end{equation}
and
%
\begin{equation}\label{eq:1posB}
\begin{split}
\int_0^\xi (\nabla u,\nabla\phi)dt&=\int_0^\xi\int_t^\xi (\nabla  u(\cdot, t),\nabla  u(\cdot, \theta)) d\theta dt=\int_0^\xi\int_0^\theta (\nabla  u(\cdot,t),\nabla  u(\cdot,\theta))dt d\theta\\&=\frac{1}{2}\int_\Omega\Big|\int_0^\xi\nabla u(\bs x, t)dt\Big|^2dxdy,
\end{split}
\end{equation}
where in the last step, we used the property:
\begin{equation}\label{gttheta0}
\begin{split}
&\int_0^\xi\int_0^\theta g(t)g(\theta)dt d\theta= \int_0^\xi\int_t^\xi g(t)g(\theta) d\theta dt
= \int_0^\xi\int_\theta^\xi g(t)g(\theta) dt d\theta,
\end{split}
\end{equation}
implying
\begin{equation}\label{gttheta1}
\begin{split}
&\int_0^\xi\int_0^\theta g(t)g(\theta)dt d\theta= \frac 1 2 \int_0^\xi\int_0^\xi g(t)g(\theta) d\theta dt
= \frac 1 2 \Big|\int_0^\xi g(t) dt\Big|^2.
\end{split}
\end{equation}

Now, we deal with the  singular integral term in \eqref{eq:uphi}.
 It is straightforward to verify from the definition \eqref{MLfuncA}  that
\begin{equation}\label{Eaf1prop}
\begin{split}
\frac{1}{\lambda}\frac{d}{dt}E_{\alpha,1}(-\lambda(t-s)^\alpha)&=\frac{1}{\lambda}\frac{d}{dt}\sum\limits_{k=0}^\infty\frac{(-\lambda)^k(t-s)^{k\alpha}}{\Gamma(k\alpha+1)}=\frac{1}{\lambda}\sum\limits_{k=0}^\infty\frac{(-\lambda)^{k+1}(t-s)^{k\alpha+\alpha-1}}{\Gamma(k\alpha+\alpha)}
\\&=- (t-s)^{\alpha-1}E_{\alpha,\alpha}(-\lambda(t-s)^\alpha)=-e_{\alpha,\alpha}(-\lambda (t-s)^\alpha).
\end{split}
\end{equation}
Using the above property, we derive   
\begin{equation}\label{eq:1posC}
\begin{split}
\int_0^\xi\int_0^te_{\alpha,\alpha} & (-\lambda(t-s)^\alpha)\big(\nabla  u(\cdot, s),\nabla\phi(\cdot, s)\big)dsdt
\\& =\int_0^\xi\int_s^\xi e_{\alpha,\alpha} (-\lambda(t-s)^\alpha)\big(\nabla u(\cdot,s),\nabla\phi(\cdot, s)\big)dtds\\
&=\Big(-\frac{1}{\lambda}\Big)\int_0^\xi\Big\{\int_s^\xi  d E_{\alpha,1}(-\lambda(t-s)^\alpha)\Big\}
\big(\nabla u(\cdot, s),\nabla\phi(\cdot,s)\big)ds\\
&=\frac{1}{\lambda}\int_0^\xi \big\{1-  E_{\alpha,1}(-\lambda(\xi-s)^\alpha)\big\} \big(\nabla u(\cdot, s),\nabla\phi(\cdot,s)\big)ds.
\end{split}
\end{equation}
Hence, we obtain from the above identities  that 
\begin{equation}\label{eq:u1}
\begin{split}
\frac{1}{2}\| u(\cdot,\xi)\|_{L^2(\Omega)}^2&+\Big(\frac{a}{2}-\frac{b}{2\lambda}\Big)\int_\Omega \Big|\int_0^\xi\nabla u(\bs x, t)dt\Big|^2dxdy\\
&=\frac{1}{2}\|u_0\|_{L^2(\Omega)}^2+\int_\Omega u_1(\bs x)\phi(\bs x,0)dxdy\\
&\quad  -\frac{b}{\lambda}\int_0^\xi  E_{\alpha,1}(-\lambda(\xi-s)^\alpha) (\nabla u(\cdot,s),\nabla\phi(\cdot,s))ds.
\end{split}
\end{equation}
Note that $0<E_\alpha(-\lambda t^\alpha)\le 1$ and it is monotonically decreasing (cf. \cite{LarssonF10, Saed14}). In view of the second mean value theorem \cite{WitulaH12}, there exists  $\xi_0\in (0,\xi)$ such that
\begin{equation}\label{eq:u12}
\begin{split}
&\int_0^\xi  E_{\alpha,1}(-\lambda(\xi-s)^\alpha) (\nabla u(\cdot, s),\nabla\phi(\cdot, s))ds=
\int_{\xi_0}^\xi (\nabla u(\cdot,s),\nabla\phi(\cdot,s))ds\\
&\quad
=\int_{\xi_0}^\xi\int_s^\xi (\nabla u(\cdot, s),\nabla u(\cdot, \theta))d\theta ds
=\frac{1}{2}\int_\Omega\Big|\int_{\xi_0}^\xi\nabla u(\bs x, t)dt\Big|^2dxdy\geq 0,
\end{split}
\end{equation}
where we used \eqref{gttheta0}-\eqref{gttheta1}.
Therefore, by \eqref{eq:phi}, \eqref{eq:u1} and the Cauchy-Schwarz inequality, 
\begin{align*}
&\frac{1}{2}\| u(\cdot, \xi)\|_{L^2(\Omega)}^2+\Big(\frac{a}{2}-\frac{b}{2\lambda}\Big)\int_\Omega \Big|\int_0^\xi\nabla u(\bs x, t)dt\Big|^2dxdy
 \\& \leq \frac{1}{2}\| u_0\|_{L^2(\Omega)}^2+\int_\Omega  u_1(\bs x)\phi(\bs x,0)dxdy= \frac{1}{2}\| u_0\|_{L^2(\Omega)}^2+\int_0^\xi \int_\Omega  u_1(\bs x) u(\bs x,\theta)dxdy d\theta\\
&\leq \frac{1}{2}\| u_0\|_{L^2(\Omega)}^2+\| u_1\|_{L^2(\Omega)}\int_0^\xi \| u(\cdot, \theta)\|_{L^2(\Omega)}d\theta\le
 \frac{1}{2}\| u_0\|_{L^2(\Omega)}^2+T\| u_1\|_{L^2(\Omega)}  \| u\|_{L^\infty(0,T;L^2(\Omega))}.
\end{align*}
Therefore,  if ${a}-{b}/{\lambda}\geq 0,$ then by the Cauchy-Schwarz inequality,
\begin{align*}
\frac 1 2 \| u\|_{L^\infty(0,T;L^2(\Omega))}^2 &\leq \frac 1 2  \|u_0\|_{L^2(\Omega)}^2+T\| u_1\|_{L^2(\Omega)}\|u\|_{L^\infty(0,T;L^2(\Omega))} \notag\\
  &\leq \frac 1 2 \|u_0\|_{L^2(\Omega)}^2+\frac{1}{4}\|u\|_{L^\infty(0,T;L^2(\Omega))}^2+ {T^2}  \| u_1\|_{L^2(\Omega)}^2,
\end{align*}
which immediately implies \eqref{energy1}.
\end{proof}

\begin{rem}\label{bond} {\em
Using a standard energy argument, we can  follow \cite{CannarsaS08} to derive the estimate:
\begin{equation}\label{existest}
\begin{split}
\|u_t\|^2_{L^\infty(0,T;L^2(\Omega))}&+(a-b/\lambda)\|\nabla u\|^2_{L^\infty(0,T;L^2(\Omega))}\\
& \leq \|u_1\|^2_{L^2(\Omega)}+\Big(a+\frac{b}{\lambda}+\frac{2b^2}{(a\lambda-b)\lambda}\Big)\|\nabla u_0\|^2_{L^2(\Omega)},
\end{split}
\end{equation}
under the condition: $a-b/\lambda\ge 0.$ }
\end{rem}



%
%

\subsection{Spectral-Galerkin discretization using Fourier-like basis in space}
As we are mostly interested in dealing with the singular fractional integrals, we consider $\Omega=(-1,1)$ or $\Omega=(-1,1)^2.$ Let ${\mathbb P}_N$ be the set of all polynomials of degree at most $N,$ and let ${\mathbb P}_N^0=\big\{\phi\in {\mathbb  P}_N\,:\, \phi=0\; {\rm on}\; \partial \Omega\big\}.$
The spectral-Galerkin approximation of \eqref{Prob:CC} in space is to find $u_N(\cdot, t)\in {\mathbb P}_N^0$ such that   for any  $v_N,w_N, z_N\in {\mathbb P}_N^0,$
\begin{equation}\label{eq:scheme}
\begin{cases}
 (\partial_t^2 u_N, v_N)_{\Omega}+a(\nabla  u_N,\nabla v_N)_{\Omega}=b\displaystyle \int_0^t e_{\alpha,\alpha}(-\lambda(t-s)^\alpha)(\nabla  u_N,\nabla v_N)_{\Omega}\,ds,
 \\[8pt]
  (u_N(\cdot, 0),w_N)_{\Omega}=(u_{0},w_N)_{\Omega},\quad
  (\partial_t u_N(\cdot, 0),w_N)_{\Omega}=(u_{1},z_N)_{\Omega}. 
 \end{cases}
\end{equation}
We next employ the matrix diagonalization technique (cf. \cite[Ch. 8]{ShenTW11}) to reduce  \eqref{eq:scheme} to a sequence of
integral-differential equations in time.

 We first look at the one-dimensional case. Define
\begin{equation}\label{basisA}
\phi_k(x)=\frac{1}{\sqrt{4k+6}}(L_k(x)-L_{k+2}(x)),  \quad k\geq 0,
\end{equation}
where $L_k(x)$ is the  Legendre polynomial of degree $k$. Then we have
\begin{equation}\label{PN0}
{\mathbb P}_N^0={\rm span}\{\phi_k:\; 0\le k\le N-2\}.
\end{equation}
 It is known that under this basis, the stiffness matrix is identity as  $(\phi'_k,\phi'_j)=\delta_{kj},$
 and the mass matrix
$B$ with entries $b_{kj}=(\phi_k,\phi_j)_{\Omega}$ is symmetric and pentadiagonal (cf. \cite{Shen94b}).  
Thus, writing
\begin{equation}\label{eq:1DSu}
u_N(x,t)=\sum\limits_{k=0}^{N-2} \hat u_k(t) \phi_k(x),\quad \bs {\hat u}(t) =(\hat u_{0}(t),  \hat u_1(t),\cdots, \hat u_{N-2}(t))',
\end{equation}
 the scheme \eqref{eq:scheme} becomes
\begin{equation}\label{eq:coupled}
\begin{cases}
B\bs {\hat  u}''(t)+a \bs {\hat  u}(t)=b\displaystyle\int_0^t e_{\alpha,\alpha}(-\lambda(t-s)^\alpha)\bs {\hat  u}(s) ds,\quad t\in (0,T],\\[8pt]
 B\bs {\hat  u}(0)=\bs {\hat u}_0,\quad  B \bs {\hat  u}'(0)=\bs {\hat u}_1,
\end{cases}
\end{equation}
where $\bs {\hat u}_i=((u_i,\phi_0)_\Omega,\cdots, (u_i,\phi_{N-2})_\Omega)'$ for $i=0,1.$ Let $\{\lambda_i\}_{i=0}^{N-2}$ be the eigenvalues of $B,$ and let $E$ be the corresponding eigenvectors of $B.$ Note that $E$ is an orthonormal matrix, so $E'E=I_{N-1}.$
Introducing the change of variables: $\bs {\hat u}=E \bs {v}$ with $\bs v=(v_0,v_1,\cdots, v_{N-2})',$   we can decouple the system \eqref{eq:coupled} into
\begin{equation}\label{eq:1D}
\begin{cases}
{v}_i''(t)+{a}\lambda_i^{-1} v_i(t)={b}\lambda_i^{-1}\displaystyle\int_0^t e_{\alpha,\alpha}(-\lambda (t-s)^\alpha)v_i(s)ds, \quad t\in (0,T], \\[8pt]
v_i(0)=\lambda_i^{-1}v_{0i},\quad  v_i'(0)=\lambda_i^{-1}v_{1i},
\end{cases}
\end{equation}
for $i=0,\cdots, N-1,$ where   $\bs {\hat u}_j=E \bs {v}_j$ with $\bs v_j=(v_{j0},v_{j1},\cdots, v_{j(N-2)})'$ for $j=0,1.$

Similarly, in the two-dimensional case, we have
\begin{equation}\label{2Dcase}
{\mathbb P}_N^0=\text{span}\big\{\phi_i(x)\phi_j(y)\;:\; 0\leq i,j\leq N-2\big\}.
\end{equation}
We write
\begin{align}\label{2Dusolu}
& u_N(x,t)=\sum\limits_{i,j=0}^{N-2} \hat{u}_{ij}(t)\phi_i(x)\phi_j(y), \quad  {\widehat U}(t)=(\hat{u}_{ij}(t))_{i,j=0,\cdots, N-2},
\end{align}
Then the counterpart of \eqref{eq:coupled} becomes
\begin{equation}\label{eq:coupled2D}
\begin{cases}
B {\widehat  U}''B+a ({\widehat  U}B+B {\widehat  U})=b\displaystyle\int_0^t e_{\alpha,\alpha}(-\lambda(t-s)^\alpha)({\widehat  U}B+B {\widehat  U}) ds,\quad t\in (0,T],\\[8pt]
 B{\widehat  U}B|_{t=0}=\widehat U_0,\quad   B{\widehat  U}'B|_{t=0}=\widehat U_1,
\end{cases}
\end{equation}
Using the full matrix diagonalisation technique and  setting 
 ${\widehat U}=EWE'$ with $W=(w_{ij})_{i,j=0\cdots,N-2}$ (cf. \cite[Ch. 8]{ShenTW11}), we  have
\begin{equation}\label{eq:2D}
\begin{cases}
{w}_{ij}''(t)+a\big({\lambda_i^{-1}+\lambda_j^{-1}}\big)w_{ij}(t)=b\big({\lambda_i^{-1}+\lambda_j^{-1}}\big) \displaystyle \int_0^t e_{\alpha,\alpha}(-\lambda (t-s)^\alpha)w_{ij}(s)ds, \\[8pt]
w_{ij}(0)=(\lambda_i\lambda_j)^{-1}w_{ij}^0,\quad  v_i'(0)=(\lambda_i\lambda_j)^{-1}w_{ij}^1,
\end{cases}
\end{equation}
for all  $t\in (0,T]$, where
${\widehat U}_k=EW^kE'$ and $W^k=(w_{ij}^k)_{i,j=0,\cdots,N-2}$ for $k=0,1$.

\section{Algorithm development for the integral-differential equation}\label{Sect3} 
\setcounter{equation}{0}
\setcounter{lmm}{0}
\setcounter{thm}{0}

  \subsection{Prototype problem} Consider the prototype integral-differential  equation:
\begin{equation}\label{eq:1T}
 \begin{cases}
 u''(t)+c u(t)=d\displaystyle\int_0^t e_{\alpha,\alpha}(-\lambda (t-s)^\alpha)u(s)ds,\quad  t\in (0,T], \;\;\; 0<\alpha<1,\\[7pt]
  u(0)=u_0, \quad u'(0)=u_1,
 \end{cases}
 \end{equation}
 where the constants $c,d>0,$ and the singular kernel $e_{\alpha,\alpha}(t)=t^{\alpha-1}E_{\alpha,\alpha}(t)$ (cf. \eqref{eafaf}).

To alleviate ill-conditioning of the following multistep collocation method, we  adopt an ingredient of numerical treatment for Hamiltonian systems (cf. \cite{FengQ03})   and rewrite  \eqref{eq:1T} into the first-order system:   
\begin{equation}\label{pqsystem}
\begin{cases}
  {p}'(t)+cq(t)=d\displaystyle\int_0^t e_{\alpha,\alpha}(-\lambda (t-s)^\alpha) q(s)ds;  
 \quad {q}'(t)=p(t),  \quad t\in (0,T],\\[6pt]
 q(0)=u_0, \quad p(0)=u_1,
\end{cases}
\end{equation}
by setting $q=u$ and $p=u'.$

Similar to Theorem \ref{thm:energy}, we have the following stability of \eqref{pqsystem}.
\begin{theorem}\label{thm:1D}
Assume  $u_0=0$ and $a-b/\lambda\geq 0$ in \eqref{pqsystem}. Then, we have the bound
\begin{equation}
p^2(t)+(a-b/\lambda)q^2(t)\leq u_1^2, \ \ \ \forall t\in [0,T].
\end{equation}
\end{theorem}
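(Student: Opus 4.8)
The plan is to adapt the energy argument behind Theorem~\ref{thm:energy} to the first-order system \eqref{pqsystem}, now producing the claimed \emph{pointwise-in-time} bound. First I would discard $p$ and work with the scalar equation for $q=u$, namely $q''(t)+c\,q(t)=d\int_0^t e_{\alpha,\alpha}(-\lambda(t-s)^\alpha)q(s)\,ds$ with $q(0)=u_0=0$, $q'(0)=u_1$ and $p=q'$; since $p^2(0)+(c-d/\lambda)q^2(0)=u_1^2$, it suffices to show that $p^2(t)+(c-d/\lambda)q^2(t)$ never exceeds $u_1^2$ (here $c,d,\lambda>0$, and the hypothesis, written in the constants of \eqref{pqsystem}, reads $c\ge d/\lambda$). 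The opening step is a standard energy identity: multiplying the equation by $2q'=2p$, integrating over $(0,t)$ and using $q(0)=0$, $p(0)=u_1$ gives
\[
p^2(t)+c\,q^2(t)=u_1^2+2d\int_0^t p(\tau)\Big(\int_0^\tau e_{\alpha,\alpha}(-\lambda(\tau-s)^\alpha)\,q(s)\,ds\Big)\,d\tau .
\]

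The crux --- the analogue of the singular-kernel manipulation in the proof of Theorem~\ref{thm:energy} --- is to tame the inner memory integral, whose kernel $e_{\alpha,\alpha}(-\lambda\rho^\alpha)\sim\rho^{\alpha-1}/\Gamma(\alpha)$ is singular at $\rho=0$ and cannot be differentiated directly. The device is the exact-primitive identity \eqref{Eaf1prop}, which I would rewrite as $e_{\alpha,\alpha}(-\lambda(\tau-s)^\alpha)=\frac1\lambda\,\partial_s E_{\alpha,1}(-\lambda(\tau-s)^\alpha)$; one integration by parts in $s$ --- valid because $s\mapsto E_{\alpha,1}(-\lambda(\tau-s)^\alpha)$ is absolutely continuous on $[0,\tau]$ (its $s$-derivative $\lambda\,e_{\alpha,\alpha}(-\lambda(\tau-s)^\alpha)$, though unbounded near $s=\tau$, is integrable since $\alpha-1>-1$), and the $s=0$ boundary term vanishes as $q(0)=u_0=0$ while $E_{\alpha,1}(0)=1$ --- yields
\[
\int_0^\tau e_{\alpha,\alpha}(-\lambda(\tau-s)^\alpha)\,q(s)\,ds=\frac1\lambda\Big(q(\tau)-\int_0^\tau E_{\alpha,1}(-\lambda(\tau-s)^\alpha)\,p(s)\,ds\Big).
\]
Inserting this into the energy identity and using $2d\int_0^t p(\tau)q(\tau)\,d\tau=d\,q^2(t)$ (again $q(0)=0$) leaves
\[
p^2(t)+\Big(c-\tfrac{d}{\lambda}\Big)q^2(t)=u_1^2-\frac{2d}{\lambda}\,Q(t),\qquad
Q(t):=\int_0^t p(\tau)\int_0^\tau E_{\alpha,1}(-\lambda(\tau-s)^\alpha)\,p(s)\,ds\,d\tau .
\]

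It then remains only to check $Q(t)\ge0$, which I expect to be the one genuinely non-elementary point. Symmetrising in $(s,\tau)$ gives $Q(t)=\tfrac12\int_0^t\int_0^t E_{\alpha,1}(-\lambda|\tau-s|^\alpha)\,p(\tau)p(s)\,ds\,d\tau$, so it suffices that $\rho\mapsto E_{\alpha,1}(-\lambda\rho^\alpha)=E_\alpha(-\lambda\rho^\alpha)$ be a positive-definite kernel; this is a consequence of its complete monotonicity on $[0,\infty)$ for $0<\alpha<1$ (the classical strengthening of the positivity and monotone decrease of $E_\alpha(-\lambda t^\alpha)$ already recorded in the proof of Theorem~\ref{thm:energy}): by Bernstein's theorem $E_\alpha(-\lambda\rho^\alpha)=\int_0^\infty e^{-r\rho}\,d\mu(r)$ for a positive measure $\mu$, and each $e^{-r|\cdot|}$ is positive-definite (its Fourier transform $2r/(r^2+\omega^2)$ is non-negative), so $Q(t)\ge0$. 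Since $d,\lambda>0$ this gives $p^2(t)+(c-d/\lambda)q^2(t)\le u_1^2$, as asserted; the hypothesis $c\ge d/\lambda$ further makes the left-hand side a sum of non-negative terms, so $p^2(t)\le u_1^2$ and, when $c>d/\lambda$, $q^2(t)\le u_1^2/(c-d/\lambda)$. The main obstacle is thus twofold: the trick \eqref{Eaf1prop} that converts the singular $e_{\alpha,\alpha}$ into the bounded, completely monotone $E_{\alpha,1}$ at the price of one admissible integration by parts, and the positivity $Q(t)\ge0$ that closes the estimate. (Alternatively one may follow the proof of Theorem~\ref{thm:energy} more literally, multiplying by $\phi(\theta)=\int_\theta^t q(\rho)\,d\rho$ instead; but since the target involves $p^2=(q')^2$ the multiplier $2q'$ above is the more direct route, and both reduce to the same positivity.)
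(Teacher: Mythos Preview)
Your argument is correct. The paper omits the proof, referring only to that of Theorem~\ref{thm:energy}. Both routes rest on the primitive identity \eqref{Eaf1prop}, but the details diverge: Theorem~\ref{thm:energy} multiplies by the antiderivative $\phi(\theta)=\int_\theta^\xi u\,d\rho$ and obtains the sign of the memory term via the second mean value theorem, whereas you multiply by $2q'=2p$ --- the natural choice when the target is $p^2+(c-d/\lambda)q^2$ --- and close with the positive-definiteness of $\rho\mapsto E_\alpha(-\lambda\rho^\alpha)$ through complete monotonicity and Bernstein's representation. This positivity step is genuinely stronger than what the mean-value trick alone would give with the multiplier $2q'$ (the indicator kernel $\mathbbm{1}_{\{|\tau-s|\le\sigma_0\}}$ that a direct second-MVT reduction produces is \emph{not} positive-definite), and it is precisely the ``positive-type kernel'' property used in the viscoelasticity references \cite{LarssonF10,Saed14} the paper cites. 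Your final parenthetical is apt: transplanting the $\phi$-multiplier argument verbatim would bound $q^2$, not $p^2+(c-d/\lambda)q^2$.
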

\begin{proof}
The proof is the same as that of Theorem \ref{thm:energy}, and hence is omitted.
 \end{proof}
\begin{rem}\label{incase} {\em
If $a-b/\lambda\geq 0$, one can define a Hamiltonian
\begin{equation}\label{Htform}
H(t)=(u'(t))^2+(a-b/\lambda)u^2(t),
\end{equation}
for \eqref{eq:1T} and obtain a damped Hamiltonian system.

The assumption $u_0=0$ seems restrictive, however, it is indispensable for this bound. Our numerical experiments show that the Hamiltonian may increases or even outweighs the initial Hamiltonian without the condition  {\rm(}see Figure \ref{Fig:ExODE1} below{\rm)}. }
\end{rem}

\subsection{A multistep collocation method}
For simplicity, we partition the interval $[0,T]$ into $K$  subintervals of equal length, that is,
$$
I_k=(t_{k-1},t_k), \quad  t_k=kT/K,\quad k=1,\cdots, K;\quad t_0=0.$$
Let $\{x_j\}_{j=0}^N\subseteq [-1,1]$ be a set of Jacobi-Gauss-Lobatto (JGL) points arranged in ascending order, and denote the grids
\begin{equation}\label{Ijnodes}
t_j^{k}=\frac{t_{k-1}+t_k} 2+\frac{t_k-t_{k-1}} 2 x_j,\quad 0\le  j\le N;\quad 1\le k\le K.
\end{equation}
 Let $P_N, Q_N \in C^0(0,T)$ be the   multistep spectral-collocation approximations of $p,q,$ respectively,
 and each
consists of  $K$ pieces:
\begin{equation}\label{approx1}
\begin{split}
& P_N|_{I_1}=p_N^1=p_*+\hat p_N^1, \quad Q_N|_{I_1}=q_N^1=q_*+\hat q_N^1, \quad \hat p_N^1, \hat q_N^1 \in {\mathbb P}_N;\\
& P_N|_{I_k}=p_N^k\in {\mathbb P}_N, \quad Q_N|_{I_k}=q_N^k\in {\mathbb P}_N, \quad  k=2,3\cdots, K,
\end{split}
\end{equation}
where  $p_*,q_*$ are two pre-defined functions to capture leading singular terms (see Subsection \ref{pqdefn}).

  We find these $K$ pieces  in sequence as follows.
\begin{itemize}
\item For $k=1,$ we find  $\{p_N^1, q_N^1\}$ via the collocation scheme:
\begin{equation}\label{k1schm}
\begin{cases}
  \dot{p}_{N}^1(t_j^1)+cq_{N}^1(t_j^1)=d\displaystyle\int_0^{t_j^1} e_{\alpha,\alpha}(-\lambda (t_j^1-s)^\alpha)
  q_{N}^1(s)ds, \quad 1\le j\le N; \\[7pt]
 \dot{q}_{N}^1(t_j^1)=p_{N,1}(t_j^1),\quad 1\le j\le N; \\[7pt]
 q_{N}^1(0)=u_0, \quad p_{N}^1(0)=u_1,
\end{cases}
\end{equation}

\vskip 4pt
\item For any $k\in \{2,\cdots, K\},$ using the   computed values $\{p_N^l, q_N^l\}_{l=1}^{k-1},$
we find $\{p_N^k, q_N^k\}$  via the collocation scheme:
\begin{equation}\label{kkschm}
\begin{cases}
  \dot{p}_{N}^k(t_j^k)+cq_{N}^k(t_j^k)=d\displaystyle \sum\limits_{l=1}^{k}\displaystyle\int_{I_l}e_{\alpha,\alpha}(-\lambda
  (t_j^k-s)^\alpha) q_{N}^l(s)ds, 
 \quad 1\le j\le N; \\[9pt]
 \dot{q}_{N}^k (t_j^k)=p_{N}^k(t_j^k), \quad 1\le j\le N; \\[7pt]
 q_{N}^k(t_{k-1})=q_{N}^{k-1}(t_{k-1}),\quad   p_{N}^k(t_{k-1})=p_{N}^{k-1}(t_{k-1}).
\end{cases}
\end{equation}
\end{itemize}
At this point, some important issues need to be addressed.
 \begin{itemize}
 \item[(i)] It is known that  the solution of \eqref{eq:1T} (or \eqref{pqsystem}) has a singular behaviour at $t=0.$  We therefore subtract  $p_*,q_*$ from $p,q,$ so that $p-p_*,q-q_*$ have  higher regularity, leading to  globally higher order accuracy.  We show below that  $p_*,q_*$ can be determined analytically by following the argument in \cite{Brunner04, CaoHX03}. \medskip
 \item[(ii)] How to accurately compute the integrals involving  the singular kernel $e_{\alpha,\alpha}(\cdot)$?
 \end{itemize}
In what follows, we shall resolve these issues (see Subsections \ref{pqdefn}-\ref{wellmat}).
 
 To fix the idea, we restrict our attentions to the Chebyshev approximation.
 Let $T_n(x)=\cos(n\,{\rm arccos} x)$ be the Chebyshev polynomial of degree $n$, 
 and denote the scaled Chebyshev polynomial by
\begin{equation}\label{mappedCheby}
T_n^k(t)=T_n(x),\quad x=\frac{t-t_{k-1}}{t_k-t_{k-1}}+ \frac{t-t_{k}}{t_k-t_{k-1}},\quad t\in I_k.
\end{equation}
Hereafter, $\{x_j\}_{j=0}^N$ are the Chebyshev-Gauss-Lobatto (CGL) points.

\subsubsection{Ansatz and the formulation of $p_*,q_*$}\label{pqdefn}
%
Our starting point is to reformulate \eqref{eq:1T} into the following integral form.  This allows us to justify the well-posedness of the problem
and derive the desired $p_*, q_*$ that can capture the leading singularities.
\begin{lemma}\label{Integform} Letting $z(t)=u''(t)$,   we can rewrite \eqref{eq:1T} as
\begin{equation}\label{eq:z}
z(t)=\int_0^t \big\{de_{\alpha,\alpha+2}(-\lambda (t-s)^\alpha)-c(t-s)\big\}z(s)ds+ f(t),\ \
 \end{equation}
where
 \begin{equation}\label{ftform}
 f(t)=du_0e_{\alpha,\alpha+1}(-\lambda t^\alpha)+du_1e_{\alpha,\alpha+2}(-\lambda t^\alpha)-cu_1t-cu_0.
\end{equation}
Then  the problem  \eqref{eq:1T}  has a unique solution $u\in C(\Lambda)$.
\end{lemma}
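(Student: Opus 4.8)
The plan is to establish two things: first, the algebraic equivalence of the integro-differential formulation \eqref{eq:1T} with the Volterra integral equation \eqref{eq:z}--\eqref{ftform} under the substitution $z = u''$; and second, the existence and uniqueness of a continuous solution by a standard Volterra argument. For the first part, I would start from \eqref{eq:1T}, write $u(t) = u_0 + u_1 t + \int_0^t (t-s) z(s)\,ds$ by integrating $u'' = z$ twice (using the initial conditions $u(0)=u_0$, $u'(0)=u_1$), and substitute this expression for $u$ into both the $c\,u(t)$ term and the convolution term $d\int_0^t e_{\alpha,\alpha}(-\lambda(t-s)^\alpha) u(s)\,ds$. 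The $cu(t)$ term contributes $-c u_0 - c u_1 t - c\int_0^t (t-s) z(s)\,ds$, which matches the $-c(t-s)$ kernel piece in \eqref{eq:z} and the $-cu_1 t - cu_0$ piece of $f$.

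The convolution term requires the key computational identity: interchanging the order of integration in the double integral $\int_0^t e_{\alpha,\alpha}(-\lambda(t-s)^\alpha)\big(u_0 + u_1 s + \int_0^s (s-r) z(r)\,dr\big)\,ds$ and using the semigroup-type property of the functions $e_{\alpha,\beta}$, namely that $\int_0^{\tau} e_{\alpha,\alpha}(-\lambda\sigma^\alpha)\,d\sigma = e_{\alpha,\alpha+1}(-\lambda\tau^\alpha)$ and $\int_0^\tau (\tau-\sigma) e_{\alpha,\alpha}(-\lambda\sigma^\alpha)\,d\sigma = e_{\alpha,\alpha+2}(-\lambda\tau^\alpha)$. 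These follow termwise from the series definition \eqref{MLfuncA} together with the definition \eqref{eafaf}: multiplying $z^k/\Gamma(k\alpha+\beta)$ by powers of $(t-s)$ and integrating raises the second index $\beta$ by one per integration, since $\int_0^\tau \sigma^{k\alpha+\alpha-1}\,d\sigma = \tau^{k\alpha+\alpha}/(k\alpha+\alpha)$ and $\Gamma(k\alpha+\alpha+1) = (k\alpha+\alpha)\Gamma(k\alpha+\alpha)$, and similarly the repeated integral against $(\tau - \sigma)$ raises it by two. Applying these (with Fubini justified by absolute convergence, since $e_{\alpha,\alpha}(-\lambda\sigma^\alpha) = \sigma^{\alpha-1}E_{\alpha,\alpha}(-\lambda\sigma^\alpha)$ is integrable near $0$ as $\alpha > 0$ and $E_{\alpha,\alpha}$ is bounded on bounded sets) produces exactly the $d e_{\alpha,\alpha+1}(-\lambda t^\alpha) u_0$ and $d e_{\alpha,\alpha+2}(-\lambda t^\alpha) u_1$ terms in $f$, and the $d e_{\alpha,\alpha+2}(-\lambda(t-s)^\alpha)$ kernel acting on $z$. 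Collecting all terms and isolating $z(t) = u''(t)$ yields \eqref{eq:z}. Conversely, given a continuous $z$ solving \eqref{eq:z}, one defines $u$ by double integration and checks it solves \eqref{eq:1T}.

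For the well-posedness claim, once \eqref{eq:z} is in hand, note that the kernel $k(t,s) := d\, e_{\alpha,\alpha+2}(-\lambda(t-s)^\alpha) - c(t-s)$ is, up to the integrable singularity coming from $e_{\alpha,\alpha+2}(-\lambda\tau^\alpha) = \tau^{\alpha+1} E_{\alpha,\alpha+2}(-\lambda\tau^\alpha)$ — which is in fact continuous and vanishing at $\tau = 0$ since $\alpha+1 > 0$ — a continuous function on the closed triangle $\{0 \le s \le t \le T\}$, hence bounded; and $f \in C(\Lambda)$ since $e_{\alpha,\alpha+1}(-\lambda t^\alpha) = t^\alpha E_{\alpha,\alpha+1}(-\lambda t^\alpha)$ is continuous on $[0,T]$ (again $\alpha > 0$). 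Thus \eqref{eq:z} is a linear Volterra integral equation of the second kind with bounded kernel and continuous data, so the standard contraction / Picard iteration argument on $C([0,T])$ — or equivalently a Gronwall estimate showing the homogeneous equation has only the zero solution — gives a unique solution $z \in C(\Lambda)$, and then $u(t) = u_0 + u_1 t + \int_0^t (t-s)z(s)\,ds \in C(\Lambda)$ (indeed $C^2$).

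I expect the main obstacle to be the bookkeeping in the termwise integration identities for $e_{\alpha,\alpha+1}$ and $e_{\alpha,\alpha+2}$ and the careful Fubini interchange in the double convolution integral — nothing deep, but one must track the index shifts in the Gamma functions correctly and confirm the weak singularity of $e_{\alpha,\alpha}$ is tame enough ($\alpha > 0$) to legitimize swapping integrals. A secondary, purely cosmetic subtlety is that the kernel of the resulting Volterra equation \eqref{eq:z} is actually \emph{better} than weakly singular (it is continuous), because differentiating the original equation and recasting in terms of $z = u''$ trades the singular kernel $e_{\alpha,\alpha}$ acting on $u$ for the regular kernel $e_{\alpha,\alpha+2}$ acting on $z$; this is precisely what makes the reformulation useful for the subsequent regularity analysis of $p_*, q_*$, and it should be remarked on. The uniqueness half of the last sentence is then immediate from the general Volterra theory and requires no new idea.
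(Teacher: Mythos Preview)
Your derivation of \eqref{eq:z}--\eqref{ftform} is essentially identical to the paper's: both start from $u(t)=u_0+u_1t+\int_0^t(t-s)z(s)\,ds$, substitute into \eqref{eq:1T}, and use the index-raising identities for $e_{\alpha,\beta}$ (the paper quotes these from Podlubny's formula \eqref{eq:PodId} rather than deriving them termwise, but the content is the same), together with a Fubini swap in the double convolution.

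Where you diverge is the well-posedness argument. The paper observes that the integral operator $\mathscr{T}_\alpha[z]=\int_0^t\{de_{\alpha,\alpha+2}(-\lambda(t-s)^\alpha)-c(t-s)\}z(s)\,ds$ is continuous, declares it Hilbert--Schmidt and hence compact on $C(\Lambda)$, and then invokes the Fredholm Alternative. Your route is more elementary and, for a Volterra equation, more natural: you note (correctly, and more explicitly than the paper) that the kernel is in fact \emph{continuous} on the closed triangle because $e_{\alpha,\alpha+2}(-\lambda\tau^\alpha)=\tau^{\alpha+1}E_{\alpha,\alpha+2}(-\lambda\tau^\alpha)$ vanishes at $\tau=0$, so the standard Picard/contraction argument for second-kind Volterra equations with bounded kernel applies directly. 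This buys you a self-contained proof without appealing to compact-operator theory, and it sidesteps the implicit gap in the paper's invocation of Fredholm (namely, one still has to know that the homogeneous Volterra equation has only the trivial solution, which is exactly what the Picard iteration establishes). Your remark that the reformulation trades the singular kernel $e_{\alpha,\alpha}$ for the regular one $e_{\alpha,\alpha+2}$ is also a useful observation the paper does not make explicit.
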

\begin{proof}
 Solving $u''(t)=z(t)$ with  $u(0)=u_0$ and  $u'(0)=u_1,$ we find
 $$u(t)=u_0+u_1t+\int_0^t (t-s)z(s)ds.$$
 Therefore, we can rewrite \eqref{eq:1T} as
 \begin{equation}\label{eq:z0}
 z(t)+c\Big(u_0+u_1t+\int_0^t (t-s)z(s)ds\Big)=d\int_0^t e_{\alpha,\alpha}(-\lambda (t-s)^\alpha) \Big(u_0+u_1s+\int_0^s (s-\theta)z(\theta)d\theta\Big)ds.
 \end{equation}
Using the identity (cf. \cite{Pod99}): for $t>a, \alpha,\beta>0$ and $r>-1,$
\begin{equation} \label{eq:PodId}
\int_a^t e_{\rho,\gamma}(-z(t-s)^\rho)(s-a)^rds=\Gamma(r+1)e_{\rho,\beta+r+1}(-z(t-a)^\alpha),
\end{equation}
one verifies readily that
 \begin{align}
&\int_0^t e_{\alpha,\alpha}(-\lambda (t-s)^\alpha)ds
 =e_{\alpha,\alpha+1}(-\lambda t^\alpha), \quad
 \int_0^t s\,e_{\alpha,\alpha}(-\lambda (t-s)^\alpha)ds
 =e_{\alpha,\alpha+2}(-\lambda t^\alpha),\label{eq:z1}
 \end{align}
 and also by the definition \eqref{eq:EF2},
 \begin{equation}\label{eq:z2}
 \begin{split}
& \int_0^t\int_0^s e_{\alpha,\alpha}(-\lambda (t-s)^\alpha) (s-\theta)z(\theta)d\theta ds=\int_0^t\int_\theta^t e_{\alpha,\alpha}(-\lambda (t-s)^\alpha)(s-\theta)ds z(\theta)d\theta\\
&\qquad=\int_0^t \bigg\{\sum\limits_{k=0}^\infty \frac{(-\lambda)^k}{\Gamma(k\alpha+\alpha)}\int_\theta^t (t-s)^{\alpha-1+k\alpha}(s-\theta)ds\bigg\}z(\theta)d\theta\\
&\qquad=\int_0^t\sum\limits_{k=0}^\infty \frac{(-\lambda)^k (t-\theta)^{\alpha+1+k\alpha}}{\Gamma(k\alpha+\alpha+2)}z(\theta)\,d\theta
=\int_0^t e_{\alpha,\alpha+2}(-\lambda (t-\theta)^\alpha)z(\theta)\,d\theta.
\end{split}
  \end{equation}
Substituting \eqref{eq:z1}-\eqref{eq:z2}  into \eqref{eq:z0} leads to \eqref{eq:z}-\eqref{ftform}.

   Note that the operator
   $${\mathscr T}_\af [z]:=\int_0^t \big\{de_{\alpha,\alpha+2}(-\lambda (t-s)^\alpha)-c(t-s)\big\}z(s)ds$$
    is continuous, so it  is a Hilbert-Schimit operator.  It  also implies  ${\mathscr T}_\af$ is compact from $C(\Lambda)$ to $C(\Lambda)$
    \cite[p. 277]{Yosida80}.  The existence and uniqueness of the solution to \eqref{eq:z} immediately follows from the Fredholm Alternative.
\end{proof}

It is important to point out that  Brunner
(cf. \cite[Thm 6.1.6]{Brunner04}) studied  a class of integral equations with the weakly singular kernel $(t-s)^{-\mu} K(s,t),$ where $0<\mu<1$ and
$K$ is smooth, and formally characterised the singular behaviour of the solutions.  Although the result therein cannot be directly applied  to  \eqref{eq:z},
we can use the formulation of the singularity as an {\em ansatz}  to extract the most singular part of the solution of  \eqref{eq:z}.
\begin{theorem}\label{prop:gamij} For small $t>0,$
the solution of $\eqref{eq:1T}$ has the form
\begin{equation}\label{eq:ansatz}
u(t)=\sum\limits_{i,j \atop
i+j\alpha\geq2}\gamma_{ij}t^{i+j\alpha}+u_1t+u_0,
\end{equation}
where $\{\gamma_{ij}\}$ are real coefficients.
Here, the first several most singular terms of $u(t)$ can be worked out as follows
\begin{align}
u(t)&=u_*(t)+\phi(t)\notag\\
&:=\sum\limits_j\bigg\{du_1\frac{(-\lambda)^{j-3/\alpha-1}}{\Gamma(j\alpha+1)}\mathbbm{1}_{\{3/\alpha\in\mathbb{N}, 4/\alpha>j>3/\alpha\}}+du_0\frac{(-\lambda)^{j-2/\alpha-1}}{\Gamma(j\alpha+1)}\mathbbm{1}_{\{2/\alpha\in\mathbb{N}, 4/\alpha>j>2/\alpha\}}\bigg\} t^{j\alpha}\notag\\
&+\sum\limits_j\bigg\{du_1\frac{(-\lambda)^{j-2/\alpha-1}}{\Gamma(j\alpha+2)}\mathbbm{1}_{\{2/\alpha\in\mathbb{N}, 3/\alpha>j>2/\alpha\}}+du_0\frac{(-\lambda)^{j-1/\alpha-1}}{\Gamma(j\alpha+2)}\mathbbm{1}_{\{1/\alpha\in\mathbb{N}, 3/\alpha>j>1/\alpha\}}\bigg\}t^{1+j\alpha}\notag\\
&+\sum\limits_{0<j<2/\alpha}\bigg\{du_0\frac{(-\lambda)^{j-1}}{\Gamma(j\alpha+3)}+du_1\frac{(-\lambda)^{j-1/\alpha-1}}{\Gamma(j\alpha+3)}\mathbbm{1}_{\{1/\alpha\in\mathbb{N},\; j>1/\alpha\}}\bigg\}t^{2+j\alpha}\notag\\
&+\sum\limits_{0<j<1/\alpha}\bigg\{du_1\frac{(-\lambda)^{j-1}}{\Gamma(j\alpha+4)}+du_0\frac{(-\lambda)^{j+1/\alpha-1}}{\Gamma(j\alpha+4)}\mathbbm{1}_{\{1/\alpha\in\mathbb{N}\}}\bigg\}t^{3+j\alpha}+\phi(t), \label{udecomp}
\end{align}
where $\mathbbm{1}_S$ is the indicator function of the set $S,$  $\phi(t)\in C^4(\Lambda)$ and
$u_1,u_0, d$ are the same as in \eqref{eq:1T}.  With this, we take $q_*, p_*$ in  \eqref{approx1} to be
\begin{equation}\label{pqstar}
q_*(t)=u_*(t), \quad p_*(t)=u_{*}'(t).
\end{equation}
\end{theorem}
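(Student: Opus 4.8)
The plan is to convert \eqref{eq:1T} into the equivalent second-kind Volterra system from Lemma \ref{Integform} and then run a Picard (Neumann-series) iteration, keeping careful track of the powers of $t$ that are generated. Writing $z=u''$, Lemma \ref{Integform} gives
\[
z=\mathscr{T}_\alpha[z]+f,\qquad u(t)=u_0+u_1t+\int_0^t(t-s)\,z(s)\,ds,
\]
with $f$ as in \eqref{ftform} and $\mathscr{T}_\alpha[z](t)=\int_0^t\big\{de_{\alpha,\alpha+2}(-\lambda(t-s)^\alpha)-c(t-s)\big\}z(s)\,ds$. The computational engine is the fact, immediate from the Mittag-Leffler series \eqref{MLfuncA} and the convolution identity \eqref{eq:PodId}, that for any $r\ge 0$
\[
\mathscr{T}_\alpha[t^r]=d\,\Gamma(r+1)\sum_{k\ge 0}\frac{(-\lambda)^k}{\Gamma((k+1)\alpha+r+3)}\,t^{(k+1)\alpha+r+2}-c\,\frac{\Gamma(r+1)}{\Gamma(r+3)}\,t^{r+2},
\]
and likewise $\int_0^t(t-s)s^r\,ds=\frac{\Gamma(r+1)}{\Gamma(r+3)}t^{r+2}$; each of these operations sends a monomial $t^r$ to a series of monomials whose exponents exceed $r$ by at least $2$, with coefficients decaying super-exponentially because of the $\Gamma$ in the denominator.

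First I would prove the ansatz \eqref{eq:ansatz}. Expanding \eqref{ftform} by \eqref{MLfuncA} gives
\[
f(t)=-cu_0-cu_1t+du_0\sum_{j\ge 1}\frac{(-\lambda)^{j-1}}{\Gamma(j\alpha+1)}t^{j\alpha}+du_1\sum_{j\ge 1}\frac{(-\lambda)^{j-1}}{\Gamma(j\alpha+2)}t^{1+j\alpha},
\]
so $f$ is a superposition of monomials $t^{i+j\alpha}$ with $i\in\{0,1\}$, $j\ge 0$. Since $\mathscr{T}_\alpha$ is a Volterra operator with continuous kernel, $z=\sum_{n\ge 0}\mathscr{T}_\alpha^n f$ converges on $\Lambda$ (iterated-kernel bound), and by the displayed action of $\mathscr{T}_\alpha$ every monomial occurring in any $\mathscr{T}_\alpha^n f$ — hence, after one further application of $\int_0^t(t-s)\cdot\,ds$, every monomial of $u$ — is again of the form $t^{i+j\alpha}$ with non-negative integers $i,j$, carrying exponent $\ge 2$ except for the $u_0,u_1t$ terms. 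The rearranged series converges absolutely and locally uniformly near $t=0$ by the $\Gamma$-decay, exactly as for the entire functions $E_{\alpha,\beta}$, so by the uniqueness statement already contained in Lemma \ref{Integform} it is the genuine solution and \eqref{eq:ansatz} follows. (Equivalently, one may substitute the ansatz directly into \eqref{eq:1T}, evaluate the integral term-by-term via \eqref{eq:PodId}, and match coefficients of equal powers of $t$ to obtain a recursion for the $\gamma_{ij}$; the Neumann argument is what certifies that this formal series is the actual solution.)

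Next I would isolate $u_*$. Because $\mathscr{T}_\alpha$ raises exponents by at least $2$, every monomial of $\mathscr{T}_\alpha^n f$ with $n\ge 1$ has exponent $\ge 2$, so the part of $z$ with exponent strictly below $2$ is just the part of $f$ with exponent below $2$, namely $-cu_0-cu_1t+du_0\sum_{1\le j<2/\alpha}\frac{(-\lambda)^{j-1}}{\Gamma(j\alpha+1)}t^{j\alpha}+du_1\sum_{1\le j<1/\alpha}\frac{(-\lambda)^{j-1}}{\Gamma(j\alpha+2)}t^{1+j\alpha}$. Applying $\int_0^t(t-s)\cdot\,ds$ raises each of these exponents by $2$ with the explicit Beta-type factor above, so the part of $u$ with exponent below $4$ equals $-\frac{cu_0}{2}t^2-\frac{cu_1}{6}t^3$ (a polynomial) plus $du_0\sum_{1\le j<2/\alpha}\frac{(-\lambda)^{j-1}}{\Gamma(j\alpha+3)}t^{2+j\alpha}+du_1\sum_{1\le j<1/\alpha}\frac{(-\lambda)^{j-1}}{\Gamma(j\alpha+4)}t^{3+j\alpha}$, which are exactly the two ``main'' families of \eqref{udecomp}. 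Everything else in $u$ — the images under $\int_0^t(t-s)\cdot\,ds$ of the exponent-$\ge 2$ part of $z$, the tail $\sum_{n\ge 2}\mathscr{T}_\alpha^n(\cdots)$ whose minimal exponent is $\ge 4$, and all integer powers — consists only of ordinary polynomials and of powers $t^p$ with $p\ge 4$; this subseries converges in $C^4(\Lambda)$ by the same $\Gamma$-decay, so its sum is a function $\phi\in C^4(\Lambda)$, and since $\phi$ contains genuine powers $t^p$ with $p$ just above $4$, no smoother statement holds in general. Taking $q_*=u_*$ and $p_*=u_*'$ as in \eqref{approx1} then completes the proof.

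The step I expect to require the most care is the bookkeeping in the resonant situation where one (and therefore each smaller) of $1/\alpha,2/\alpha,3/\alpha$ is a positive integer. Then the exponent lattice $\{\,i+j\alpha\,\}$ is no longer freely generated: using $\alpha\cdot(1/\alpha)=1$, $\alpha\cdot(2/\alpha)=2$ and $\alpha\cdot(3/\alpha)=3$, a monomial such as $t^{2+j\alpha}$ or $t^{3+j\alpha}$ can be rewritten with a strictly smaller integer part, so a term that in the generic case would lie above $t^4$ (and be absorbed into $\phi$) may instead coincide with a genuinely singular power below $t^4$ and must be listed among the leading terms. These coincidences are precisely what the indicators $\mathbbm{1}_{\{1/\alpha\in\mathbb{N}\}},\ \mathbbm{1}_{\{2/\alpha\in\mathbb{N}\}},\ \mathbbm{1}_{\{3/\alpha\in\mathbb{N}\}}$ record in \eqref{udecomp}, and the extra families $t^{j\alpha}$ and $t^{1+j\alpha}$ appearing there arise by relabelling the already-computed coefficients of the two main families under those identities. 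The delicate points are to pin down these relabellings — and their ranges of summation — correctly in each resonant regime, and to check that after the relabelling the residual series still lies in $C^4(\Lambda)$ with no singular term below $t^4$ left unaccounted for.
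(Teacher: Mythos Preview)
Your proposal is correct but takes a genuinely different route from the paper. The paper substitutes the ansatz \eqref{eq:ansatz} directly into \eqref{eq:1T}, expands the right-hand side via \eqref{eq:PodId} to obtain an identity of power series (their \eqref{eq:im}), and then matches coefficients of $t^{i-2+j\alpha}$ separately for $i=0,1,2,3$, treating each case by asking which source terms $t^{(k+1)\alpha}$, $t^{(k+1)\alpha+1}$ on the right can hit a given power. You instead pass through the Volterra reformulation of Lemma~\ref{Integform} and run the Neumann series $z=\sum_{n\ge 0}\mathscr{T}_\alpha^n f$, using the key structural fact that $\mathscr{T}_\alpha$ raises exponents by at least~$2$; hence only the $n=0$ iterate (namely $f$) contributes to the sub-$t^2$ part of $z$, and after one more convolution with $(t-s)$ this yields exactly the sub-$t^4$ part of $u$. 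What your approach buys is a rigorous justification of the ansatz \eqref{eq:ansatz} itself (via convergence of the Neumann series with $\Gamma$-decaying coefficients), whereas the paper simply posits it after a one-line blow-up heuristic ruling out exponents below~$2$. What the paper's approach buys is that the four families $\gamma_{ij}$, $i=0,1,2,3$, are read off directly in the $(i,j)$-indexing of the statement, without having to translate back from your two ``main'' families.

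One small correction to your resonance paragraph: relabelling does not move a monomial across the threshold~$t^4$. The exponent of $t^{2+j\alpha}$ is what it is regardless of whether you write it as $t^{1+(j+1/\alpha)\alpha}$ or $t^{(j+2/\alpha)\alpha}$. What actually happens in the resonant case is that the $(i,j)$-labelling ceases to be unique, so the first two lines of \eqref{udecomp} and the indicator-decorated extras in lines~3 and~4 are alternative labels for the \emph{same} monomials already produced by your two main families $t^{2+j\alpha}$ and $t^{3+j\alpha}$ --- exactly as you say in your closing sentence. Your Neumann-series computation already delivers the complete singular part $u_*$; no genuinely new singular term enters under resonance, only the bookkeeping of matching your answer to the paper's $(i,j)$-indexed display.
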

\begin{proof}
Suppose that there exists a term of the form $t^\theta, \theta<2$ in the ansatz. Substituting the term into \eqref{eq:1T} and letting $t$ approach $0$,
one easily concludes that the left hand side of \eqref{eq:1T} blows up, contradicting the right hand side, which is $0$.
 As a result, non-integer powers of the form $t^\theta, \theta<2$ are expelled in the ansatz of $u(t)$.

 On the other hand, it is impossible for us to extract  the explicit expression of $\gamma_{ij}$ for all $t^{i+j\alpha}$ as
    it is extremely tedious and complicated.
Hence, we can restrict our attention to exploiting the coefficients $\gamma_{ij}$ of term $t^{i+j\alpha}, 2<i+j\alpha<4$. 


Substituting \eqref{eq:ansatz} into \eqref{pqsystem} and using  \eqref{eq:PodId},  yield
\begin{equation}\label{eq:im}
\begin{split}
&\sum\limits_{i,j \atop i+j\alpha\geq2}\gamma_{ij}(i+j\alpha)(i-1+j\alpha)t^{i-2+j\alpha}+c\Big\{\sum\limits_{i,j \atop i+j\alpha\geq 2}\gamma_{ij}t^{i+j\alpha}+u_1t+u_0\Big\} \\
=&du_1\sum\limits_{k=0}^\infty\frac{(-\lambda)^k}{\Gamma(k\alpha+\alpha+2)}t^{(k+1)\alpha+1}+du_0\sum\limits_{k=0}^\infty\frac{(-\lambda)^k}{\Gamma(k\alpha+\alpha+1)}t^{(k+1)\alpha}\\
&+d\sum\limits_{i,j\atop i+j\alpha> 2}\Gamma(i+1+j\alpha)\gamma_{ij}\sum\limits_{k=0}^\infty \frac{(-\lambda)^k}{\Gamma(k\alpha+\alpha+i+1+j\alpha)}t^{(k+1+j)\alpha+i}.
\end{split}
\end{equation}

Now, we equate powers of lower order terms $t^{1+j\alpha}, t^{j\alpha}, t^{j\alpha-1}$ and $t^{j\alpha-2}$ for the following four cases respectively.
It is noteworthy to point out that monomials are excluded out of our consideration for these cases.

\vskip 3pt
{\bf Case 1}: $\big\{j\,: \    3+j\alpha<4,\ j\in\mathbb{N}\big\}$
\vskip 3pt

We consider similar terms of the form $t^{1+j\alpha}$. Note that the candidates in the right hand side of \eqref{eq:im} which could have the form are $t^{(k+1)\alpha+1}$ and  $t^{(k+1)\alpha}$. Let
\begin{equation}
\begin{cases}
1+j\alpha=(k+1)\alpha+1 \quad  \Rightarrow\quad k=j-1,\notag\\
1+j\alpha=(k+1)\alpha   \qquad\ \ \Rightarrow\quad k=j-1+1/\alpha,\ \ \text{if}\ 1/\alpha\in\mathbb{N}.
\end{cases}
\end{equation}
Hence, equating coefficients of $t^{1+j\alpha}$ on both sides of \eqref{eq:im} yields
 \begin{align}\label{eq:gam3j}
\gamma_{3j}(3+j\alpha)(2+j\alpha)=du_1\frac{(-\lambda)^{j-1}}{\Gamma(j\alpha+2)}+du_0\frac{(-\lambda)^{j+1/\alpha-1}}{\Gamma(j\alpha+2)}\mathbbm{1}_{\{1/\alpha\in\mathbb{N}\}},\notag\\
\quad\qquad\qquad\quad \qquad\gamma_{3j}=du_1\frac{(-\lambda)^{j-1}}{\Gamma(j\alpha+4)}+du_0\frac{(-\lambda)^{j+1/\alpha-1}}{\Gamma(j\alpha+4)}\mathbbm{1}_{\{1/\alpha\in\mathbb{N}\}}.
\end{align}

\vskip 3pt
{\bf Case 2}: $\big\{j \ :\   2+j\alpha<4, \  j\in\mathbb{N} \big\}$
\vskip 3pt

Now, we consider similar terms of the form $t^{j\alpha}$. Similar as the previous case by considering two candidates $t^{(k+1)\alpha+1}$ and  $t^{(k+1)\alpha}$ of the right hand side of \eqref{eq:im}, we have
\begin{equation}
\begin{cases}
j\alpha=(k+1)\alpha \qquad\ \   \Rightarrow\quad k=j-1,\notag\\
j\alpha=(k+1)\alpha+1   \quad \Rightarrow\quad k=j-1-1/\alpha,\ \ \text{if}\ 1/\alpha\in\mathbb{N} \ \text{and}\ j>1/\alpha.
\end{cases}
\end{equation}
Equating coefficients for $t^{j\alpha}$ on both sides of \eqref{eq:im} implies
 \begin{align}\label{eq:gam2j}
\gamma_{2j}(2+j\alpha)(1+j\alpha)=du_0\frac{(-\lambda)^{j-1}}{\Gamma(j\alpha+1)}+du_1\frac{(-\lambda)^{j-1/\alpha-1}}{\Gamma(j\alpha+1)}\mathbbm{1}_{\{1/\alpha\in\mathbb{N}, j>1/\alpha\}},\notag\\
\quad\qquad\qquad\quad \qquad\gamma_{2j}=du_0\frac{(-\lambda)^{j-1}}{\Gamma(j\alpha+3)}+du_1\frac{(-\lambda)^{j-1/\alpha-1}}{\Gamma(j\alpha+3)}\mathbbm{1}_{\{1/\alpha\in\mathbb{N}, j>1/\alpha\}}.
\end{align}

{\bf Case 3}: $\big\{j \ :\  1+j\alpha<4, \   j\in\mathbb{N}\big\}$
\vskip 3pt

For the term $t^{j\alpha-1}$, we follow the same fashion to have
 \begin{equation}
\begin{cases}
j\alpha-1=(k+1)\alpha+1 \qquad\Rightarrow\quad k=j-2/\alpha-1,\ \ \text{if}\ 2/\alpha\in\mathbb{N} \ \text{and}\ j>2/\alpha\notag\\
j\alpha-1=(k+1)\alpha  \quad \qquad\ \  \Rightarrow\quad k=j-1/\alpha-1,\ \ \text{if}\ 1/\alpha\in\mathbb{N} \ \text{and}\ j>1/\alpha.
\end{cases}
\end{equation}
Equating coefficients for $t^{j\alpha-1}$ yields
 \begin{align}\label{eq:gam1j}
\gamma_{1j}(1+j\alpha)(j\alpha)&=du_1\frac{(-\lambda)^{j-2/\alpha-1}}{\Gamma(j\alpha)}\mathbbm{1}_{\{2/\alpha\in\mathbb{N}, 3/\alpha>j>2/\alpha\}}+du_0\frac{(-\lambda)^{j-1/\alpha-1}}{\Gamma(j\alpha)}\mathbbm{1}_{\{1/\alpha\in\mathbb{N}, 3/\alpha> j>1/\alpha\}},\notag\\
\gamma_{1j}&=du_1\frac{(-\lambda)^{j-2/\alpha-1}}{\Gamma(j\alpha+2)}\mathbbm{1}_{\{2/\alpha\in\mathbb{N}, 3/\alpha>j>2/\alpha\}}+du_0\frac{(-\lambda)^{j-1/\alpha-1}}{\Gamma(j\alpha+2)}\mathbbm{1}_{\{1/\alpha\in\mathbb{N}, 3/\alpha>j>1/\alpha\}}.
\end{align}

{\bf Case 4}: $\big\{j\ :\    j\alpha<4,\  j\in\mathbb{N}\big\}$
\vskip 3pt 

Finally, we consider the term $t^{j\alpha-2}$,
 \begin{equation}
\begin{cases}
j\alpha-2=(k+1)\alpha+1 \qquad\Rightarrow\quad k=j-3/\alpha-1,\ \ \text{if}\ 3/\alpha\in\mathbb{N} \ \text{and}\ j>3/\alpha,\notag\\
j\alpha-2=(k+1)\alpha  \quad \qquad\ \  \Rightarrow\quad k=j-2/\alpha-1,\ \ \text{if}\ 2/\alpha\in\mathbb{N} \ \text{and}\ j>2/\alpha.
\end{cases}
\end{equation}
Equating coefficients for $t^{j\alpha-2}$ leads to
 \begin{align}\label{eq:gam0j}
& \gamma_{0j}(j\alpha)(j\alpha-1)\notag\\
&=du_1\frac{(-\lambda)^{j-3/\alpha-1}}{\Gamma(j\alpha-1)}\mathbbm{1}_{\{3/\alpha\in\mathbb{N}, 4/\alpha> j>3/\alpha\}}+du_0\frac{(-\lambda)^{j-2\alpha-1}}{\Gamma(j\alpha-1)}\mathbbm{1}_{\{2/\alpha\in\mathbb{N}, 4/\alpha>j>2/\alpha\}},\notag\\
\gamma_{1j}&=du_1\frac{(-\lambda)^{j-3/\alpha-1}}{\Gamma(j\alpha+1)}\mathbbm{1}_{\{3/\alpha\in\mathbb{N}, 4/\alpha>j>3/\alpha\}}+du_0\frac{(-\lambda)^{j-2/\alpha-1}}{\Gamma(j\alpha+1)}\mathbbm{1}_{\{2/\alpha\in\mathbb{N}, 4/\alpha>j>2/\alpha\}}.\notag
\end{align}

Once  lower order terms (i.e., $t^{i+j\alpha}$ with $i+j\alpha<4$) are determined, the remainder is wrapped up into $\phi(t)\in C^4(\Lambda)$.
\end{proof}

\begin{rem}{\em 
We exclude the cases $i+j\alpha\in \mathbb{N}$ in that polynomials can be absorbed into $\phi(t)$. }
\end{rem}

\subsubsection{Mapping  techniques for evaluating weakly singular integrals}   In the implementation of the scheme  \eqref{k1schm}-\eqref{kkschm},
we have to deal with singular integrals of Type-I:
\begin{equation}\label{type1sing}
\begin{split}
 {\mathcal I}_\alpha^{\rm I}(t) =&\int_{t_{k-1}}^{t}   e_{\alpha,\alpha}(-\lambda (t-s)^\alpha) g(s)\, ds,\\
& {\rm for}\;\;  g(s)=s^\beta,\  t=t_j^1\in (t_0,t_1], \;\; k=1,  
\\& \text{or}\;\;    g(s)=T_n^k(s), \ t=t_j^k\in (t_{k-1},t_k],\;k=1,2,\cdots, K,
\end{split}
\end{equation}
and the nearly singular integers of  Type-II:
\begin{equation}\label{type2sing}
\begin{split}
 {\mathcal I}_\alpha^{\rm I\!I}(t)=& \int_{t_{k-1}}^{t_k}   e_{\alpha,\alpha}(-\lambda (t-s)^\alpha) g(s)\, ds, \\
& {\rm for}\;\;   g(s)=s^\beta,\ t>t_1,\; t\approx t_1,\;  k=1;\\
& {\rm or}\;\; g(s)=T_n^k(s), t>t_k,\; t\approx t_k;  \;\; k=1,2,\cdots, K,
\end{split}
\end{equation}
where $\beta\in {\mathbb R}$ relates to the aforementioned ansatz $p_*,q_*$ in the first subinterval $[0,t_1].$

The difficulty of approximating  both types resides in the fact that the kernel $e_{\alpha,\alpha}(\cdot)$ has  infinitely many terms of singular powers with different singular behaviours  (cf. \eqref{MLfuncA} and \eqref{eafaf}).  As a result,  a numerical quadrature, e.g. Jacobi-Gauss quadrature, involving a single weight function  cannot provide  the satisfactory accuracy.   Indeed,
 we depict in Figure \ref{Fig:illu}  the integrands with several parameters, and observe that  the integrands exhibit heavy boundary layers at one end of the interval.
\begin{figure}[!htb]
\centering
\resizebox{140mm}{70mm}{\includegraphics{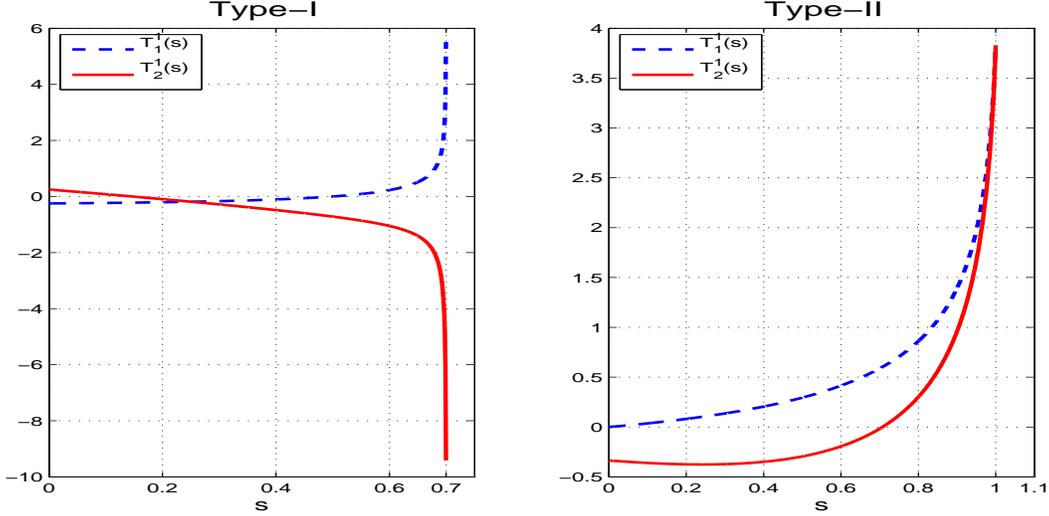}}
\caption{(Left): A plot of $e_{0.6,0.6}(-(0.7-s)^{0.6})T_n^1(s), n=1\ \text{or}\ 2, s\in [0,0.7]$; (Right): A plot of $e_{0.6,0.6}(-(1.01-s)^{0.6})T_n^1(s), n=1\ \text{or}\ 2, s\in [0,1]$.} \label{Fig:illu}
\end{figure}


To surmount this obstacle, we resort to  the mapping technique that can redistribute the quadrature points to the end of the interval where they are
mostly needed to resolve the boundary layer.   Following the idea of \cite{WangS05}, we introduce the one-sided singular mapping:
\begin{equation}
t=h(y;r)=t_r+(t_l-t_r) \Big(\frac{1-y} 2\Big)^{1+r},  \;\;\; y\in [-1,1],\;\; t\in [t_l,t_r], \;\;\;   r\in\mathbb{N}.
\end{equation}
Let $\{y_i,\omega_i\}_{i=0}^N$ be the  Gauss-Legendre quadrature points and weights on $[-1,1],$ and define the mapped points $\{t_i=h(y_i;r)\}_{i=0}^N.$ Denote  by
$f(t)$  a generic integrand on $(t_l,t_r)$ with a singular layer near $t=t_r.$  Basically,  we have
\begin{equation}\label{singint}
\int_{t_l}^{t_r} f(t)dt=c_r\int_{-1}^1 f(h(y;r))  \Big(\frac{1-y} 2\Big)^{r} dy\approx c_r\sum_{i=0}^N f(t_i) \Big(\frac{1-y_i} 2\Big)^{r}\omega_i,
\end{equation}
where $c_r=(r+1)\frac{t_r-t_l} 2.$ We see that with the  factor $(1-y)^r$, the integrand is much better behaved  in $y.$  On the other hand,  more and more points
are clustered near $t=t_r$ as $r$ increases.
To demonstrate the gain of the mapping technique,  we consider  two examples of different type:  (i) $f(t)=e_{0.6,0.6}(-(0.7-t)^{0.6})T_n^1(t), t\in (0,0.7),$ and
(ii) $f(t)=e_{0.6,0.6}(-(1.01-t)^{0.6})T_n^1(t), t\in (0,1).$  Note that we can calculate  the exact values of two integrals by using the property of ML-functions.

In Figure \ref{Fig:illu2},  we depict  the error curves of the usual quadrature and  the mapped approaches (i.e., $r=0$ and $r=3$) against various $N.$
We observe a much faster decay of the errors from the mapped approach.   Therefore, with the mapping, we can compute the singular/nearly singular
integrals much more accurately.


\begin{figure}[!htb]
\centering
\resizebox{130mm}{70mm}{\includegraphics{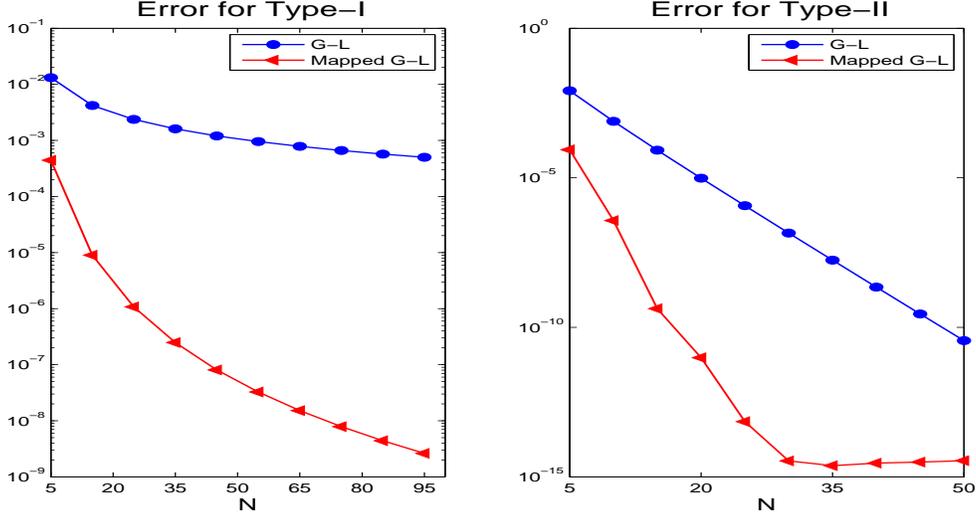}}
\caption{Errors of  Gauss-Legendre quadrature (G-L) and mapped G-L quadrature (with $r=3$). Left: Case (i); Right: Case (ii).} \label{Fig:illu2}
\end{figure}

%
%
%
%

\subsubsection{Well-conditioned collocation matrix}\label{wellmat}
The third issue of marching collocation scheme is that the condition number of standard collocation matrix  ${D}$ associated with the second order term $u_{tt}$ grows like $\mathcal{O}(N^4)$, where $N$ is the number of collocation points. To circumvent the difficulty, we first rewrite \eqref{eq:1T} into a damped Hamiltonian system  with only first order derivatives and then construct the explicit inverse matrix $B$ for first order collocation matrix through Birkhoff interpolation.

Here, we only list the explicit form of $B=B_j(x_i), 1\leq i,j\leq N$. On the standard interval $[-1,1]$, given  Chebyshev  collocation points and its associated weights $\{x_i,w_i\}_{i=0}^N$ with increasing order, $B_j(x)$ has the following form.
 \begin{align}
& B_j(x)=\sum\limits_{k=0}^{N-1} w_j[T_k(x_j)-T_N(x_j)(-1)^{N+k}] \partial^{-1}_xT_k(x),
\end{align}
where $\partial_x^{-1} T_k(x) = \int_{-1}^x T_k(y)dy,$ and 
\begin{align}
& \partial^{-1}_x T_0(x)=1+x, \;\;\;  \partial^{-1}_x T_1(x)=\frac{x^2-1}{2},\notag\\
& \partial^{-1}_x T_k(x)=\frac{T_{k+1}(x)}{2(k+1)}-\frac{T_{k-1}(x)}{2(k-1)}-\frac{(-1)^k}{k^2-1}, \;\;\;  k\geq 2.
\end{align}
The readers are referred to \cite{WangSZ14} for the details, where   the computation of $B$
is stable even for thousands of collocation points.

\subsection{Numerical experiments}
\begin{example}
Consider the  equation
 \begin{equation}\label{eq:exode}
 \begin{cases}
u''(t)+4u(t)=3\displaystyle\int_0^t e_{\alpha,\alpha}(-1.5(t-s)^{0.6})u(s)ds,\;\;  t\in[0,20],\\[8pt]
  u(0)=0, \ u'(0)=2.
 \end{cases}
 \end{equation}
\end{example}
We partition the domain into $20$ equidistant subintervals. Since the solution is singular near $t=0$, we take advantage of the ansatz \eqref{udecomp} for the first subinterval and use the approximation \eqref{k1schm}.  For other intervals, we apply standard polynomial approximation \eqref{kkschm}. Clearly, we can define the Hamiltonian $H(t)=p^2(t)+2q^2(t)$.

\begin{figure}[!htb]
\centering
\resizebox{130mm}{65mm}{\includegraphics{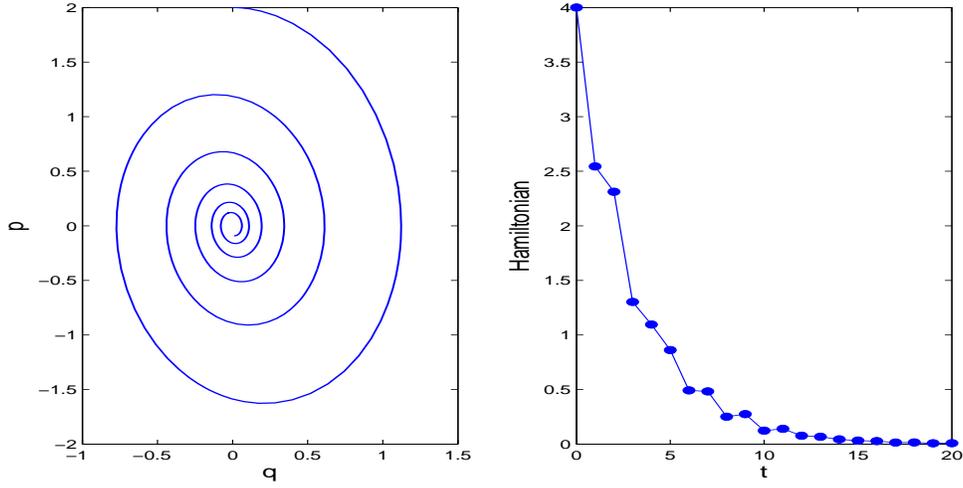}}
\caption{(Left): A phase plot for \eqref{eq:exode} with $u(0)=0, u_t(0)=2$ by using $20$ collocation points on each time interval; (Right): A plot of Hamiltonian decay with respect to time.} \label{Fig:ExODE}
\end{figure}
Indeed, as we  observe from Figure \ref{Fig:ExODE}, the Hamiltonian decreases as time increases. The system stays at the origin when it reaches the steady state.

To validate the necessity of condition $u_0=0$ in Theorem \ref{thm:1D}, we switch the initial condition of \eqref{eq:exode} to $u(0)=2, u_t(0)=0$ and obtain  Figure \ref{Fig:ExODE1}. One can easily observe that as time proceeds, the Hamiltonian  may exceed the initial one, which contracts Theorem \ref{thm:1D}.
 \begin{figure}[!htb]
\centering
\resizebox{130mm}{65mm}{\includegraphics{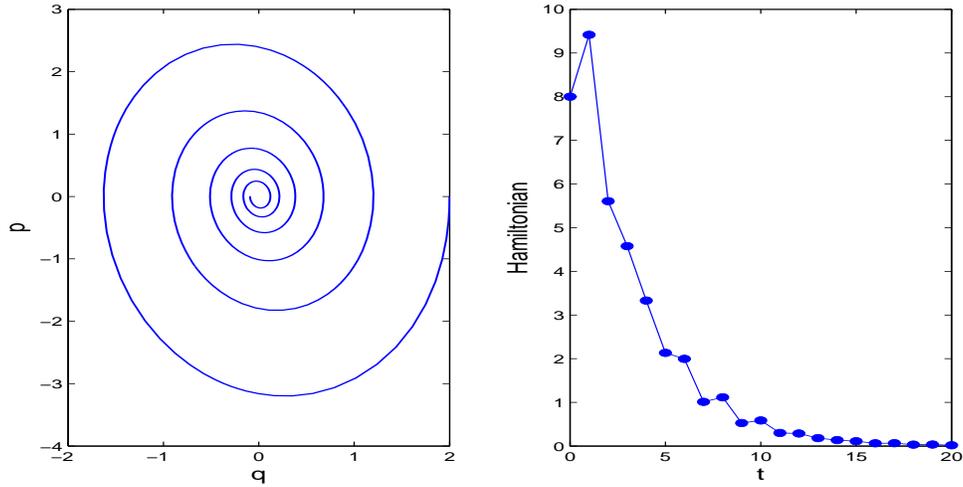}}
\caption{(Left): A phase plot for \eqref{eq:exode} with $u(0)=2, u_t(0)=0$ by using $20$ collocation points on each time interval; (Right): A plot of Hamiltonian decay with respect to time. Note that under this initial condition, the decay is not strict. }\label{Fig:ExODE1}
\end{figure}

\begin{example}
To validate the special treatment of our algorithm in the first subinterval, we consider the equation
\begin{equation}
\begin{cases}
\ddot{u}(t)+cu(t)=d\displaystyle \int_0^t e_{\alpha,\alpha}(-\lambda (t-s)^\alpha)u(s)ds+g(t),\\[8pt]
u(0)=u_0,\quad \dot u(0)=u_1,
\end{cases}
\end{equation}
where initial conditions and source term $g(t)$ are chosen such that
\begin{equation}
u(t)=t^{2+\alpha}+t^{3+2\alpha}+ \begin{cases}
\; \;(t-1)^5,\ \ t\leq 1,\\
-(t-1)^5,\ \ t>1.
\end{cases}
\end{equation}
\end{example}
Here, we aim to mimic  the ansatz in  Proposition \ref{prop:gamij}. From our algorithm, $\tau=2$ implies direct polynomial approximation for $u(t)$,
$\tau=3$ leads to  polynomial approximation for the last two terms of $u(t)$, and $\tau=5$ indicates polynomial approximation for the last term.
Numerical results are shown in the Figure \ref{Fig:ExODE2}. The number in the parentheses means the slope of associated reference line.
 \begin{figure}[!htb]\label{Fig:ExODE2}
\centering
\resizebox{120mm}{70mm}{\includegraphics{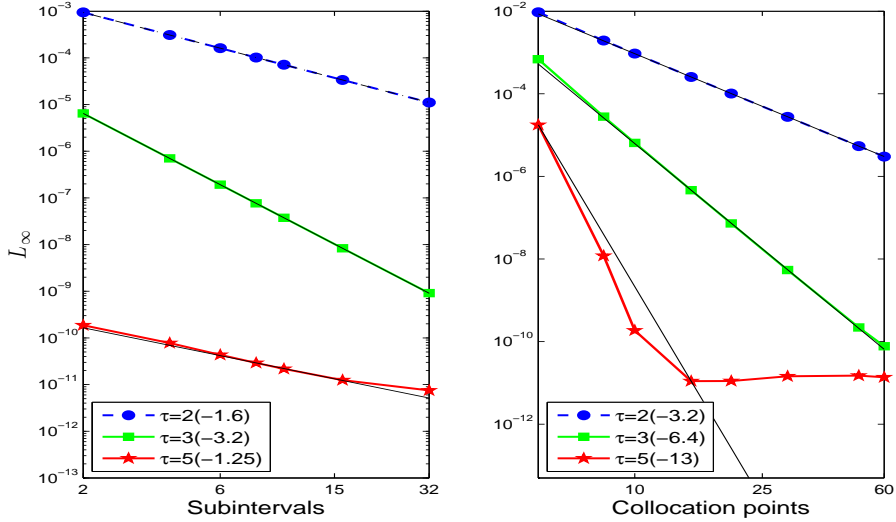}}
\caption{(Left): Numerical error for interval refinement with frozen 9 collocation points; (Right): Numerical error of approximation with $2$ equal-length subintervals, on which various collocation points are applied. }
\end{figure}

\vskip .1in

\subsection{Error analysis} To begin with, we present an important result on Chebyshev-Gauss-Lobatto  interpolation  of the singular   function: $h(t)=(t+1)^\theta, t\in [-1,1]$ and real $\theta>0$.
 \begin{lemma}\label{lem:interpolant}
Let  $I_N$ be the interpolation operator on the Chebyshev-Lobatto points $\{t_i\}_{i=0}^N$. Then
\begin{equation}
\|h-I_Nh\|_\infty\leq 2N^{-2\theta}.
\end{equation}
\end{lemma}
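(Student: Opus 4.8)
The plan is to transplant the estimate to the Chebyshev expansion of $h$ and to reduce it to a sharp tail bound for the expansion coefficients. Let $\{a_k\}$ be the Chebyshev coefficients of $h$, so $h=\tfrac{a_0}{2}+\sum_{k\ge1}a_kT_k$ with $a_k=\frac{2}{\pi}\int_{-1}^{1}h(t)T_k(t)(1-t^2)^{-1/2}\,dt$. Since $I_NT_k=T_k$ for $k\le N$, while for $k>N$ the Chebyshev--Gauss--Lobatto aliasing carries $T_k$ onto $\pm T_m$ with $0\le m\le N$, one has $\|I_NT_k\|_\infty\le1$ and hence $\|T_k-I_NT_k\|_\infty\le2$; therefore, once $\sum_k|a_k|<\infty$,
$$
\|h-I_Nh\|_\infty=\Big\|\sum_{k>N}a_k\,(T_k-I_NT_k)\Big\|_\infty\le 2\sum_{k=N+1}^{\infty}|a_k|.
$$
So the factor $2$ in the lemma is exactly the aliasing factor, and everything reduces to proving $\sum_{k=N+1}^{\infty}|a_k|\le N^{-2\theta}$.

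Next I would write the coefficients in closed form. Substituting $t=\cos\omega$ and $1+\cos\omega=2\cos^2(\omega/2)$ gives $a_k=\frac{2^{\theta+1}}{\pi}\int_0^{\pi}\cos^{2\theta}(\omega/2)\cos(k\omega)\,d\omega$, and with $\omega=2v$ and the classical beta-integral value $\int_0^{\pi/2}\cos^{2\theta}v\,\cos(2kv)\,dv=\frac{\pi\,\Gamma(2\theta+1)}{2^{2\theta+1}\Gamma(\theta+1+k)\Gamma(\theta+1-k)}$ this yields $a_k=\frac{2^{1-\theta}\,\Gamma(2\theta+1)}{\Gamma(\theta+1+k)\,\Gamma(\theta+1-k)}$. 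For $k>\theta$ the reflection formula, together with $|\sin(\pi(\theta+1-k))|=|\sin\pi\theta|$, gives $|\Gamma(\theta+1-k)|=\pi/\bigl(|\sin\pi\theta|\,\Gamma(k-\theta)\bigr)$, so that
$$
|a_k|=\frac{2^{1-\theta}\,\Gamma(2\theta+1)\,|\sin\pi\theta|}{\pi}\cdot\frac{\Gamma(k-\theta)}{\Gamma(k+\theta+1)},\qquad k>\theta,
$$
and in fact $a_k$ alternates in sign, as it should since $h$ is singular only at the left endpoint $t=-1$.

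The tail is then summed exactly by telescoping. Setting $b_k:=\Gamma(k-\theta)/\Gamma(k+\theta)$, one checks $b_k-b_{k+1}=2\theta\,\Gamma(k-\theta)/\Gamma(k+\theta+1)$, and since $b_k\downarrow0$ we get $\sum_{k=N+1}^{\infty}\Gamma(k-\theta)/\Gamma(k+\theta+1)=b_{N+1}/(2\theta)$; using $\Gamma(2\theta+1)=2\theta\,\Gamma(2\theta)$ this turns the estimate into
$$
\sum_{k=N+1}^{\infty}|a_k|=\frac{2^{1-\theta}\,\Gamma(2\theta)\,|\sin\pi\theta|}{\pi}\cdot\frac{\Gamma(N+1-\theta)}{\Gamma(N+1+\theta)}
$$
(valid for $N\ge\theta$, so every term $k>N$ lies in the range of the reflection step; when $N<\theta$ the finitely many extra terms are controlled directly from the closed form of $a_k$). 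It then remains to check two elementary inequalities: (i) $\Gamma(N+1-\theta)/\Gamma(N+1+\theta)\le N^{-2\theta}$ for all $N\ge1$, which follows from the monotone increase to $1$ of $N^{2\theta}\Gamma(N+1-\theta)/\Gamma(N+1+\theta)$, equivalently from $\tfrac{N+1-\theta}{N+1+\theta}\ge\bigl(\tfrac{N}{N+1}\bigr)^{2\theta}$, a consequence of the convexity of $\ln$; and (ii) the prefactor $\tfrac{2^{1-\theta}\Gamma(2\theta)|\sin\pi\theta|}{\pi}\le1$, which on $(0,1)$ equals $\tfrac{2^\theta\Gamma(\theta+\frac12)}{\sqrt\pi\,\Gamma(1-\theta)}$ by the duplication and reflection formulas, tends to $1$ as $\theta\to0^+$, vanishes at the positive integers, and stays $\le1$ in between. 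Combining (i) and (ii) gives $\sum_{k>N}|a_k|\le N^{-2\theta}$, hence $\|h-I_Nh\|_\infty\le2N^{-2\theta}$.

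The hard part is precisely this final constant-sharp bookkeeping: since the factor $2$ is tight --- it is approached as $\theta\to0^+$ --- no slack is available, so both gamma-function inequalities must be verified uniformly, in $\theta$ for the prefactor (reducing each interval $(n,n+1)$, $n\ge1$, to the basic $(0,1)$ case) and in $N$ for the ratio (with the easy but genuinely necessary treatment of the range $N<\theta$, which is where the estimate really needs $N$ large relative to $\theta$). Everything else --- the reduction to $2\sum_{k>N}|a_k|$, the closed form of the coefficients, and the telescoping of the tail --- is routine, and the decay $|a_k|\sim c_\theta k^{-2\theta-1}$ it produces retroactively justifies $\sum_k|a_k|<\infty$, which is what licenses the aliasing estimate to begin with. (An alternative route giving sharp constants in one stroke is the Hermite contour-integral representation of $h-I_Nh$ with the branch cut of $(1+z)^\theta$ pulled down to $(-\infty,-1]$, but that is considerably more technical.)
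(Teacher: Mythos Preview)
Your proof follows the same route as the paper's: the aliasing inequality $\|h-I_Nh\|_\infty\le 2\sum_{k>N}|a_k|$ (which the paper quotes from Boyd) combined with the decay $|a_k|=O(k^{-1-2\theta})$ of the Chebyshev coefficients of $(1+t)^\theta$ (which the paper quotes from Huang--Stynes). You supply considerably more detail---the closed-form coefficients, the exact telescoping of the tail, and the sharp-constant gamma bookkeeping---where the paper simply invokes those two references and writes $2\sum_{n>N}|a_n|=2N^{-2\theta}$ without further justification.
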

\begin{proof}
Let $a_n$ denote the exact Chebyshev  expansion coefficient of $h(t)$, i.e. $a_n=\int_{-1}^1 h(t)T_n(t)w(t)dt$, where $w(t)=(1-t^2)^{-1/2}$.
Then, a careful computation (cf. \cite[Lemma 4]{HuangS16} implies
\begin{equation}
a_n=\mathcal{O}(n^{-1-2\theta}).
\end{equation}
Furthermore, denote
$I_Nh(t)=\sum\limits_{n=0}^N\!\!{''}\, b_nT_n(x)$, where double prime means the first and the last terms are to be taken by a factor of $1/2$.
Apply \cite[Theorem 21]{Boyd01} to get
\begin{equation}
\|u-I_Nu\|_\infty\leq 2\sum\limits_{n=N+1}^\infty |a_n|=2N^{-2\theta}.
\end{equation}
This ends the proof. 
\end{proof}
\begin{rem} {\em
We note that  \cite[Theorem 3.40]{ShenTW11} provides a convergence rate for Chebyshev interpolation for rather general functions.
However, by taking advantage of the concrete form of $h(t)$,  we can get significantly better convergence rate. }
\end{rem}

For the sake of analysis,  we define the operators $\mathcal{A}^j, \mathcal{B}^j : C(I_j)\to C(I_j)$ on each $I_j$  by
\begin{equation}
(\mathcal{A}^ju)(t)=\int_{I_j} e_{\alpha,\alpha}(-\lambda (t-s)^\alpha)u(s)ds,\ \ t>t_j,
\end{equation}
and
\begin{equation}
(\mathcal{B}^ju)(t)=\int_{t_{j-1}}^t e_{\alpha,\alpha}(-\lambda (t-s)^\alpha)u(s)ds,\ \ t\in I_j.
\end{equation}
Then, there exists a best polynomial $\pi_N(\mathcal{B}^ju)$ of order $N$ such that (cf. \cite[Lemma 7]{HuangS16})
\begin{equation}
\|\mathcal{K}^ju-\pi_N(\mathcal{B}^ju)\|_\infty \leq CN^{-\alpha} \|u\|_\infty.   
\end{equation}

\begin{theorem}
Assume the ansatz \eqref{eq:ansatz} for $u$ when $t\to 0$ and $u\in H^m(0,T]$ for some $m>5/2$, and $c-d/\lambda\geq 0, \tau=4$.
 Then, for our marching scheme on the whole time span $[0,T]$, there holds
\begin{equation}\label{eq:cr}
\|p-p_N\|_{\infty}+(c-d/\lambda)\|q-q_N\|_\infty\leq CN^{-\min\{4,m-5/2\}},
\end{equation}
where $C$ depends on $u, T$ but is independent of $N$.
\end{theorem}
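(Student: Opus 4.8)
The plan is to establish the error bound by an induction on the subintervals $I_k$, $k=1,\dots,K$, tracking how the local collocation error propagates through the global history term. The key is that on every subinterval the discrete scheme \eqref{k1schm}--\eqref{kkschm} is a perturbation of the exact first-order system \eqref{pqsystem}, and thanks to the damped-Hamiltonian reformulation and the Birkhoff--Lagrange basis of Subsection~\ref{wellmat}, the collocation operator on each piece has an inverse bounded uniformly in $N$. Hence on each $I_k$ the error $(p-p_N^k,q-q_N^k)$ is controlled by (a) the best-approximation error of the exact solution restricted to $I_k$, (b) the quadrature error in evaluating the singular/nearly singular integrals of Type-I and Type-II from \eqref{type1sing}--\eqref{type2sing}, and (c) the accumulated error $\sum_{l<k}\|p-p_N^l\|_\infty+\|q-q_N^l\|_\infty$ entering through the history sum $\sum_{l=1}^k\int_{I_l}e_{\alpha,\alpha}(-\lambda(t_j^k-s)^\alpha)q_N^l(s)\,ds$. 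A discrete Gr\"onwall argument over the $K$ subintervals then closes the estimate, with the constant $C$ absorbing $T$ and $K$ but not $N$.

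\smallskip
First I would treat the first subinterval $I_1$ separately, since this is where the singularity lives. By Theorem~\ref{prop:gamij}, subtracting $p_*=u_*'$, $q_*=u_*$ leaves a remainder $\phi\in C^4(\Lambda)$; with $\tau=4$ all terms $t^{i+j\alpha}$ with $i+j\alpha<4$ are captured by $u_*$, so $\hat q_N^1$ approximates a $C^4$ function and classical Chebyshev interpolation/collocation theory (e.g.\ \cite[Thm~3.40]{ShenTW11}) together with the assumption $u\in H^m$, $m>5/2$, yields the $N^{-\min\{4,m-5/2\}}$ rate for the polynomial part. The nonsmooth pieces $p_*,q_*$ themselves are handled exactly (they are known analytically), except that they must be fed through the singular integral operator; here Lemma~\ref{lem:interpolant} and the quadrature analysis of Subsection on mapping techniques guarantee that $\mathcal I_\alpha^{\mathrm I}$ applied to each $s^\beta$ and to each $T_n^1$ is computed to within $N^{-\alpha}$ (or better, after the $r$-fold mapping), and the best-polynomial bound $\|\mathcal B^j u-\pi_N(\mathcal B^j u)\|_\infty\le CN^{-\alpha}\|u\|_\infty$ from \cite[Lemma~7]{HuangS16} controls the operator-approximation error. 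One then combines these into a bound of the form $\|p-p_N^1\|_\infty+(c-d/\lambda)\|q-q_N^1\|_\infty\le C_1 N^{-\min\{4,m-5/2\}}$, using the stability of the discrete first-order system (the discrete analog of Theorem~\ref{thm:1D}, valid here since the scheme uses the exact initial data on $I_1$).

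\smallskip
For a generic $I_k$ with $k\ge2$ I would proceed inductively. Write the error equation: subtracting \eqref{kkschm} from the exact \eqref{pqsystem} restricted to $I_k$ gives, at the collocation nodes, $\dot e_p^k(t_j^k)+c e_q^k(t_j^k)=d\sum_{l=1}^k\int_{I_l}e_{\alpha,\alpha}(-\lambda(t_j^k-s)^\alpha)e_q^l(s)\,ds+\rho_j$, where $\rho_j$ collects the local truncation and quadrature errors on $I_k$, and $e_q^l$ on $l<k$ are already bounded by the inductive hypothesis. Applying the bounded inverse of the first-order Birkhoff collocation matrix, together with a discrete stability estimate mirroring the energy identity \eqref{eq:u1}--\eqref{eq:u12} (where the sign condition $c-d/\lambda\ge0$ makes the quadratic form nonnegative and the monotone decay $0<E_{\alpha,1}(-\lambda t^\alpha)\le1$ plus the second mean value theorem controls the memory term), one obtains
\begin{equation*}
\|e_p^k\|_\infty+(c-d/\lambda)\|e_q^k\|_\infty\le C\Big(N^{-\min\{4,m-5/2\}}+\tfrac{T}{K}\sum_{l=1}^{k-1}\big(\|e_p^l\|_\infty+\|e_q^l\|_\infty\big)\Big),
\end{equation*}
and the discrete Gr\"onwall inequality yields \eqref{eq:cr} with $C$ growing like $e^{CT}$ but independent of $N$. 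The regularity needed to get the full rate on the later intervals comes from a smoothing/bootstrap observation: although $u$ is only $t^{i+j\alpha}$-type near $0$, on each $I_k$ with $k\ge2$ it is $C^\infty$ away from $0$, and the nearly singular boundary-layer behavior near $t_k$ is exactly what the Type-II mapped quadrature with parameter $r$ is designed to resolve; the $H^m$ hypothesis then furnishes the $N^{-(m-5/2)}$ contribution uniformly.

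\smallskip
\textbf{Main obstacle.} The delicate point is the uniformity of the discrete stability constant across all $K$ subintervals in the presence of the memory term. Unlike a pure ODE, each $I_k$ receives contributions from the entire past through the ML-kernel history sum, so the naive bound would lose a factor growing with $k$; the remedy is to mimic the continuous energy argument of Theorem~\ref{thm:energy} at the discrete level, exploiting that $\int_0^\xi E_{\alpha,1}(-\lambda(\xi-s)^\alpha)(\nabla u,\nabla\phi)\,ds\ge0$ survives discretization (via the second mean value theorem applied to the monotone ML-function and the quadrature positivity), so that the memory term contributes with a favorable sign rather than merely being dominated. A secondary technical difficulty is ensuring the mapped Gauss--Legendre quadrature error in \eqref{singint} is genuinely of order $N^{-\min\{4,m-5/2\}}$ and not just $N^{-\alpha}$ on the later intervals — this requires checking that after removing the leading singular powers the mapped integrand is smooth enough, which follows from the ansatz \eqref{eq:ansatz} and the choice $r$ large enough relative to the residual singularity exponent.
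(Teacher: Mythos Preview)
Your high-level plan---induction over the subintervals, an energy-type stability estimate on each piece, separate treatment of $I_1$ via the ansatz, and control of the history sum---matches the paper's strategy, and your identification of the main obstacle (uniform stability in the presence of the memory term, handled via the monotonicity of $E_{\alpha,1}$ and the second mean value theorem) is exactly right.

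Two execution-level differences deserve comment. First, you invoke the bounded inverse of the Birkhoff collocation matrix as the source of local stability, but the well-conditioning result of Subsection~\ref{wellmat} concerns only the first-order differentiation matrix, not the full operator including the Volterra piece $d\,\mathcal B^j$; turning that into a uniform-in-$N$ inverse bound for the coupled system would require extra work you do not outline. The paper sidesteps this entirely: it multiplies the pointwise collocation error equations by the Lagrange basis $l_i(t)$ and sums over $i$, which lifts the discrete system to a \emph{continuous} error ODE of the same form as \eqref{pqsystem} with explicit perturbations $F=F_1+\cdots+F_4$ and $G=G_1+G_2$. The continuous energy argument of Theorem~\ref{thm:energy} then applies verbatim to this error equation, and the residual Volterra term $F_4=b(I_N-I)\mathcal B^j e_{q,j}=O\big((1+\log N)N^{-\alpha}\|e_{q,j}\|_\infty\big)$ is simply absorbed into the left-hand side for $N$ large. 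Second, you bound the history contribution by a sum $\tfrac{T}{K}\sum_{l<k}\|e_q^l\|_\infty$ and close with discrete Gr\"onwall, whereas the paper gets the sharper estimate $\|F_3\|_\infty\le\max_{n<k}\|e_{q,n}\|_\infty$ directly from $\int_0^{t_k}e_{\alpha,\alpha}(-\lambda(t-s)^\alpha)\,ds=E_{\alpha,1}(-\lambda(t-t_k)^\alpha)-E_{\alpha,1}(-\lambda t^\alpha)\le 1$; the only accumulation is then through the matching conditions at $t_{k-1}$, harmless since $K$ is fixed. Your route would still close with the stated rate, but the paper's continuous-lifting device is cleaner and avoids proving a separate operator-inverse bound. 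Finally, the paper treats all singular integrals as computed exactly and does not analyze the mapped-quadrature error at all, so your secondary concern is additional caution not present in the original proof.
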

\begin{proof}
 Define the error function on $I_j$ by
$$e_{p,j}(t)=p^j(t)-p_N^j(t), e_{q,j}(t)=q^j(t)-q_N^j(t), \ t\in I_j. $$
Recall from \eqref{udecomp} that on the interval $I_1$, we denote
\begin{align}\label{eq:thmpq}
& {q}^1(t)=q_*(t)+\phi(t), \quad  p^1(t)=p_*(t)+\phi'(t). 
\end{align}
 Hence,  we  have
 $$e_{p,1}(t)={p}^1(t)-p_N^1(t)=\phi'(t)-\hat{p}_N^1(t), e_{q,1}(t)=q^1(t)-q_N^1(t)=\phi(t)-\hat{q}_N^1(t).$$
Then, on each $I_j$, substituting \eqref{k1schm} or \eqref{kkschm} into \eqref{pqsystem}, and subtracting the resulted equation from \eqref{pqsystem}, we have
\begin{equation}\label{eq:thmpq1}
\begin{cases}
\dot{{p}}^j(\xi_i)-\dot{p}_N^j(\xi_i) =-c{q}^j(\xi_i)+cq_N^j(\xi_i)+d\displaystyle \sum\limits_{k=1}^{j-1}\int_{I_k}e_{\alpha,\alpha}(-\lambda(\xi_i-s)^\alpha) e_{q,k}(s)ds\\
 \quad\qquad\qquad\qquad +  d \displaystyle \int_{t_{j-1}}^{\xi_i} e_{\alpha,\alpha}(-\lambda (\xi_i-s)^\alpha) e_{q,j}(s)ds,\\[8pt]
 \dot{{q}}^j(\xi_i)-\dot{q}_N^j(\xi_i)={p}^j(\xi_i)-p_N^j(\xi_i),\\
e_{p,j}(t_{j-1})=e_{p,j-1}(t_{j-1}), e_{q,j}(t_{j-1})=e_{q,j-1}(t_{j-1}),
\end{cases}
\end{equation}
where $\{\xi_i\}_{i=0}^N$ is the Chebyshev-Lobatto collocation points on $I_j$.
Multiply both sides of \eqref{eq:thmpq1} by $l_i(t)$ and sum over $i$, where $l_i(t)$ is the Lagrange interpolation basis associated with $\xi_i$ to obtain
\begin{equation}
\begin{cases}
I_N\dot{p}^j-\dot{p}_N^j=-aI_N{q}^j+aq_N^j+bI_N\sum\limits_{k=1}^{j-1}\mathcal{A}^k e_{q,k}+bI_N\mathcal{B}^je_{q,j}, \\
I_N\dot{q}^j-\dot{q}_N^j=I_N{p}^j-p_N^j.
\end{cases}
\end{equation}
Since $e_{p,j}=p^j-I_Np^j+I_Np^j-p_N^j$ and $e_{q,j}=q^j-I_Nq^j+I_Nq^j-q_N^j$,
 we therefore have the error function
 \begin{equation}\label{eq:error}
 \begin{cases}
 \dot{e}_{p,j}(t)=-ae_{q,j}(t)+b\int_{t_{j-1}}^t e_{\alpha,\alpha}(-\lambda (t-s)^\alpha)e_{q,j}(s)ds+F(t),\\
 \dot{e}_{q,j}(t)=e_{p,j}(t)+G(t),\\
 e_{p,j}(t_{j-1})=e_{p,j-1}(t_{j-1}), e_{q,j}(t_{j-1})=e_{q,j-1}(t_{j-1}),
 \end{cases}
 \end{equation}
 where
 \begin{align}
 & F(t)=\underbrace{\dot{p}^j-I_N{\dot{p}^j}}_{F_1}+\underbrace{a({q}^j-I_N{q^j})}_{F_2}+\underbrace{bI_N\sum\limits_{k=1}^{j-1}\mathcal{A}^k e_{q,k}}_{F_3}
 +\underbrace{b(I_N-I)\mathcal{B}^je_{q,j}(s)}_{F_4},\\
 &G(t)=\underbrace{\dot{q}^j-I_N{\dot{q}^j}}_{G_1}+\underbrace{I_N{p^j}-{p^j}}_{G_2}.
 \end{align}
 Integrating both sides of \eqref{eq:error} from $0$ to $\xi$ and following the proof of Theorem \ref{thm:energy}, we obtain
 \begin{align}
 e_{p,j}^2(\xi)&+(a-b/\lambda)e_{q,j}^2(\xi)\notag\\
 &\leq e_{p,j}^2(t_{j-1})+(a-b/\lambda)e_{q,j}^2(t_{j-1})-\frac{2be_{q,j}(t_{j-1})}{\lambda}\int_{t_{j-1}}^\xi \dot{e}_{q,j}(t)E_{\alpha,1}(-\lambda t^\alpha)dt\notag\\
 &\quad+\int_{t_{j-1}}^\xi F(t)e_{p,j}(t)dt+a\int_{t_{j-1}}^\xi G(t)e_{q,j}(t)dt.
 \end{align}
 Again, the second mean value theorem implies there exists a $\xi_0\in (t_{j-1},\xi)$ such that
 \begin{align}
 -\frac{2be_{q,j}(t_{j-1})}{\lambda}\int_{t_{j-1}}^\xi & \dot{e}_{q,j}(t)E_{\alpha,1}(-\lambda t^\alpha)dt= -\frac{2be_{q,j}(t_{j-1})E_{\alpha,1}(-\lambda t_{j-1}^\alpha)}{\lambda}\int_{t_{j-1}}^{\xi_0} \dot{e}_{q,j}(t)dt \notag\\
 &= \frac{2be_{q,j}(t_{j-1})E_{\alpha,1}(-\lambda t_{j-1}^\alpha)}{\lambda} (e_{q,j}(t_{j-1})-e_{q,j}(\xi_0)) \notag\\
 &\leq \bigg(\frac{2b}{\lambda}+\frac{b}{\lambda\epsilon}\bigg)e_{q,j}^2(t_{j-1})+\frac{b\epsilon}{\lambda}\|e_{q,j}\|_\infty^2,
 \end{align}
 where $\epsilon$ is an arbitrarily small positive number.
 Hence,
 \begin{align}
 e_{p,j}^2(\xi)&+(a-b/\lambda)e_{q,j}^2(\xi)
 \leq e_{p,j}^2(t_{j-1})+(a+b/\lambda+b/\lambda\epsilon)e_{q,j}^2(t_{j-1})+b\epsilon/\lambda\|e_{q,j}\|_\infty^2\notag\\
 &\quad+\frac{1}{2}\|F\|_\infty^2+\frac{1}{2}\|e_{p,j}\|_\infty^2+\frac{(a-b/\lambda)}{2}\|e_{q,j}\|_\infty^2+\frac{a^2}{2(a-b/\lambda)}\|G\|_\infty^2.
 \end{align}
 Since the inequality holds for all $\xi\in I_j$, we clearly have for $\epsilon\to 0$
  \begin{align}\label{eq:ej}
 \|e_{p,j}\|_\infty^2+(a-b/{\lambda})\|e_{q,j}\|_\infty^2
&\leq C(e_{p,j}^2(t_{j-1})+e_{q,j}^2(t_{j-1})
+\|F\|_\infty^2+\|G\|_\infty^2),
 \end{align}
 where $$C=\max\bigg\{2,2a+\frac{2b}{\lambda}+\frac{2b}{\lambda\epsilon}, \frac{a^2}{a-b/\lambda}\bigg\}.$$

 With the stability inequality at our disposal,  we next prove the convergence rate on $I_j$ by induction.

 When $j=1$, it is obvious that $e_{p,1}(0)=0=e_{q,1}(0)$.
 Next, let us bound $\|F\|_\infty$ and $\|G\|_\infty$.
 Note that in this case $F_3=0$,
Then, Lemma \ref{lem:interpolant} immediately indicates
 \begin{equation}\label{eq:FG}
 \|F_1\|_\infty=\|\phi''-I_N\phi''\|_\infty\leq CN^{-4}.
 \end{equation}
 Similarly, we have
$
  \|F_2\|_\infty\leq CN^{-8}, \ \|G_1\|_\infty\leq CN^{-6}, \ \text{and }\|G_2\|_\infty\leq CN^{-6}.
$
Moreover,
\begin{align}\label{eq:F4}
\|F_4\|_\infty&=b\|(I-I_N)\mathcal{B}e_{q,1}(s)\|_\infty,\notag\\
&\leq C\|(I-I_N)(\mathcal{B}e_{q,1}-B_N\mathcal{B}e_{q,1})\|_\infty,\notag\\
&\leq C(1+\log N)\|\mathcal{B}e_{q,1}-B_N\mathcal{B}e_{q,1}\|_\infty,\notag\\
&\leq C(1+\log N) N^{-\alpha}\|e_{q,1}\|_\infty,
\end{align}
where $\log N$ is the Lebesgue constant of the operator $I_N$.

Combining \eqref{eq:FG}--\eqref{eq:F4}, we have
\begin{equation}
 \|e_{p,1}\|_{\infty}^2+(a-b/\lambda)\|e_{q,1}\|_\infty^2\leq CN^{-8}+C(1+\log N) N^{-2\alpha}\|e_{q,1}\|_\infty^2.
 \end{equation}
 For $N$ sufficiently large, we can always have $(1+\log N)^2N^{-2\alpha}\leq (a-b/\lambda)/2C$.  Therefore,
 \begin{equation}
 \|e_{p,1}\|_{\infty}\leq CN^{-4}, \ \ \text{and} \ \ \|e_{q,1}\|_\infty \leq CN^{-4}.
 \end{equation}
Hence, \eqref{eq:cr} is true for $j=1$.

Suppose our estimate is true for all $j=1,\cdots, k$, let us consider the case $j=k+1$. From \eqref{eq:ej}, the argument is similar to the case $j=1$, except for the use  of \cite[(5.5.28)]{CanutoHQZ06}: 
\begin{align}
&\|F_1\|_\infty=\|\dot{p}^j-I_N\dot{p}^j\|_\infty \leq CN^{5/2-m},
\|F_2\|_\infty=\|q^j-I_Nq^j\|_\infty\leq CN^{1/2-m},\notag\\
&\|G_1\|_\infty=\|\dot{q}^j-I_N\dot{q}^j\|_\infty \leq CN^{3/2-m},  \|G_2\|_\infty=\|p-I_Np\|_\infty \leq CN^{3/2-m}.
\end{align}

Since $E_{\alpha,1}(-\lambda (t-s)^\alpha)$ is increasing on $s$, we conclude $e_{\alpha,\alpha}(-\lambda (t-s)^\alpha)\geq 0$. Thus,
\begin{align}
\|F_3\|_\infty&=\bigg\|\sum\limits_{n=1}^k \int_{t_{n-1}}^{t_n} e_{\alpha,\alpha}(-\lambda (t-s)^\alpha) e_{q,n}(s)ds\bigg\|_\infty \notag\\
 &\leq \max\limits_{1\leq n\leq k}\|e_{q,n}\|_\infty \int_0^{t_k} e_{\alpha,\alpha}(-\lambda (t-s)^\alpha)ds\notag\\
 &=\max\limits_{1\leq n\leq k}\|e_{q,n}\|_\infty [E_{\alpha,1}(-\lambda(t-t_n)^\alpha)-E_{\alpha,1}(-\lambda t^\alpha)]\notag\\
 &\leq \max\limits_{1\leq n\leq k}\|e_{q,n}\|_\infty \leq CN^{-\min\{4,m-5/2\}}.
\end{align}
Therefore,
  \begin{align}
 \|e_{p,k+1}\|_\infty^2+(a-{b}/{\lambda})\|e_{q,k+1}\|_\infty^2
 &\leq CN^{-\min\{8,2m-5\}},
 \end{align}
 where $C$ depends on $u, a,b,\lambda$ and $T$, but independent of $N$. This ends the proof.
\end{proof}
\begin{rem}
{\em If $u(t)$ satisfies the condition  that it has an absolutely continuous $(m-1)$st derivative $u^{(m-1)}$ on
$[0, T]$ for some $m>2$ with $u^{(m-1)}(t) = m^{(m-1)}(0) + \int_{0}^Tg(y)dy$, where $g$ is
absolutely integrable and of bounded variation $Var(g) <\infty$ on $[0,T]$, we can easily improve the result \eqref{eq:cr} to
$$\|p-p_N\|_{\infty}+(a-b/\lambda)\|q-q_N\|_\infty\leq CN^{-\min\{4,m-2\}}$$
by using \cite[Theorem 4.5]{Xiang16}. }
\end{rem}

\subsection{Numerical experiments}
\begin{example}
Consider the one-dimensional Cole-Cole model \eqref{Prob:CC} with $x\in [0,2]$. At $t=0$, we choose  initial square impulse on $x\in [0.9,1.1]$ and $u_t(x,0)=0$. 
\end{example}
To be consistent with the parameters used in numerical experiments of \cite[p. 61]{Causley11}, we take $c=1$ and $d=74/75$. Clearly, we observe that the electric field propagates $E$ evolves in a similar ashion as solution of classical wave equation in a finite interval domain (cf. \cite[p. 63]{Strauss07}), which is, a wave bounces back and force many times. Unlike the classical problem, the magnitude of $E$ in our example damps along with time because of energy loss.  The time evolution of electric field $E$ of  \eqref{eq:EF2} for $\alpha=0.6, T=1.5$ is presented in Figure \ref{Fig:Ex2}. In the experiment, we use polynomial degree of order $200$ in spatial approximation and collocation points of number $20$ on each time subinterval of length $0.3$.

\begin{figure}[!htb] \label{Fig:Ex2}
\centering
\resizebox{120mm}{45mm}{\includegraphics{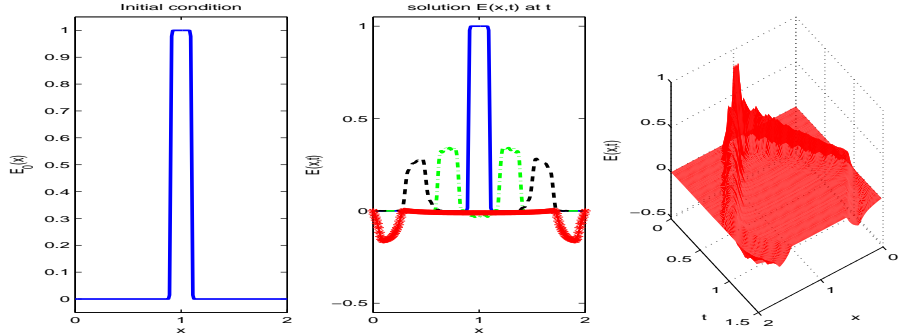}}
\caption{Left: Initial profile. Middle: Evolution of $E(x,t)$ at time points $t=0$ (blue), $t=0.375$ (green), $t=0.75$ (black) and $t=1.275$ (red).  Right:  3D solution illustration of $E(x,t)$. }
\end{figure}

\begin{example}
We consider the two-dimensional Cole-Cole model \eqref{Prob:CC} for $(x,y)\in [0,2]^2$ with smooth initial pulse $u(x,y,0)=\sin(2\pi x)\sin(\pi y/2)$. 
\end{example}

In Figure 3.7, we depict the numerical solutions at different time,   and record the evolution of numerical energy. 
Observe that the numerical solutions at different time  have very similar shapes, but  the magnitude  looks  decreasing as time increases. 
Although the numerical energy does not decay monotonically,  it is bounded by the initial energy (cf.  Theorem \ref{thm:1D} and Remark  \ref{thm:1D}).    



\begin{figure}[!htb] \label{Fig:Ex3}
\centering
\resizebox{40mm}{40mm}{\includegraphics{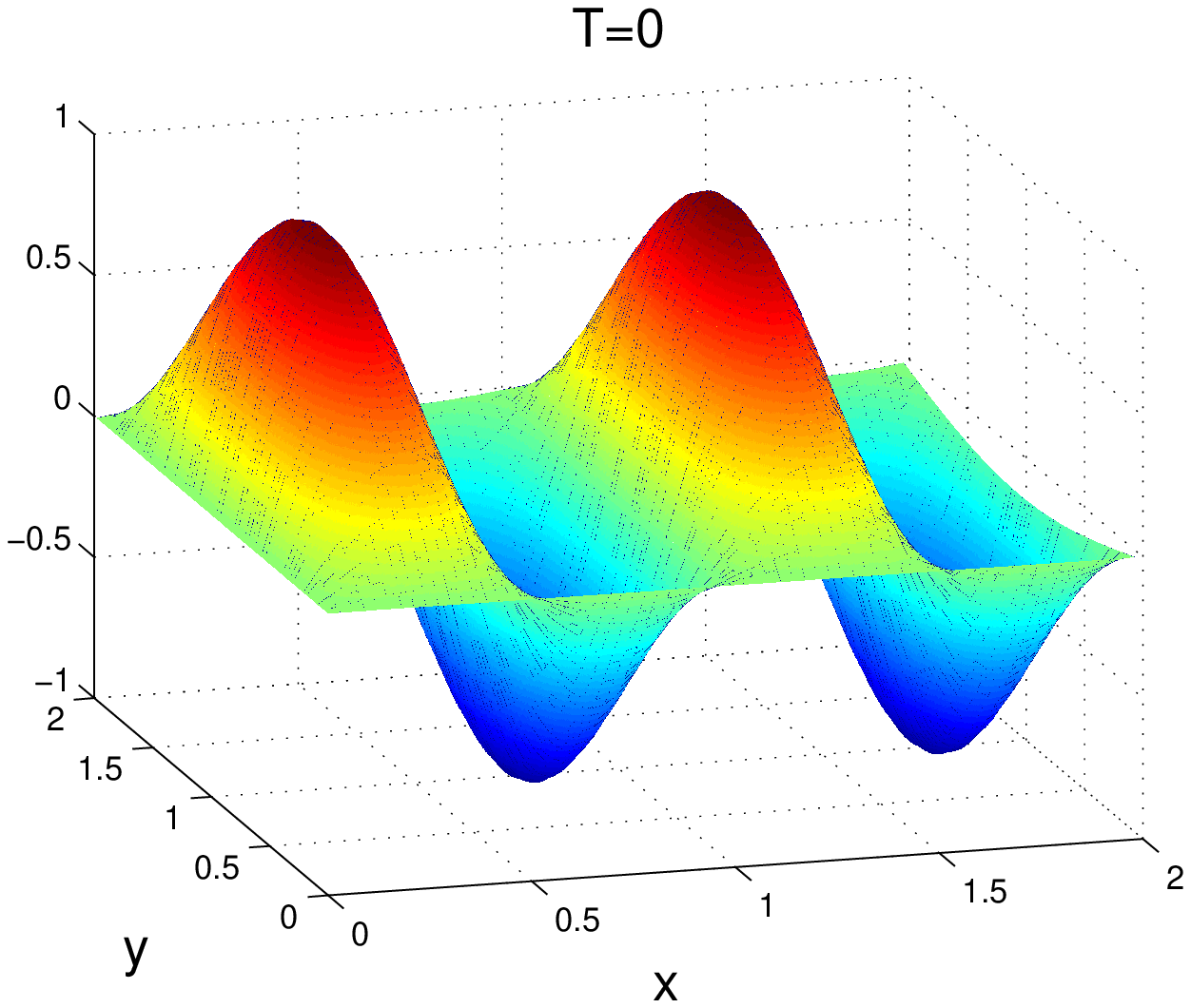}}
\resizebox{40mm}{40mm}{\includegraphics{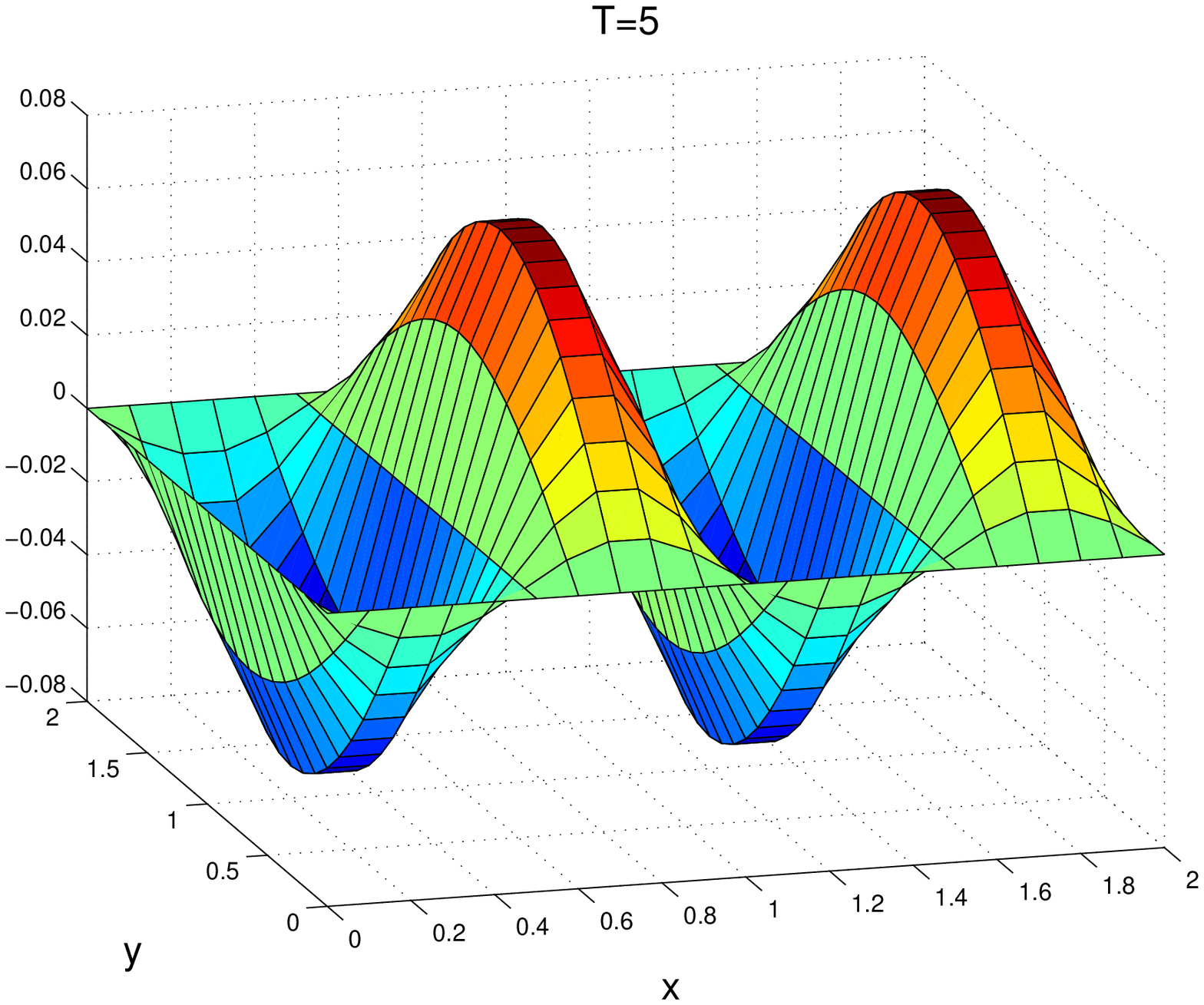}}
\resizebox{40mm}{40mm}{\includegraphics{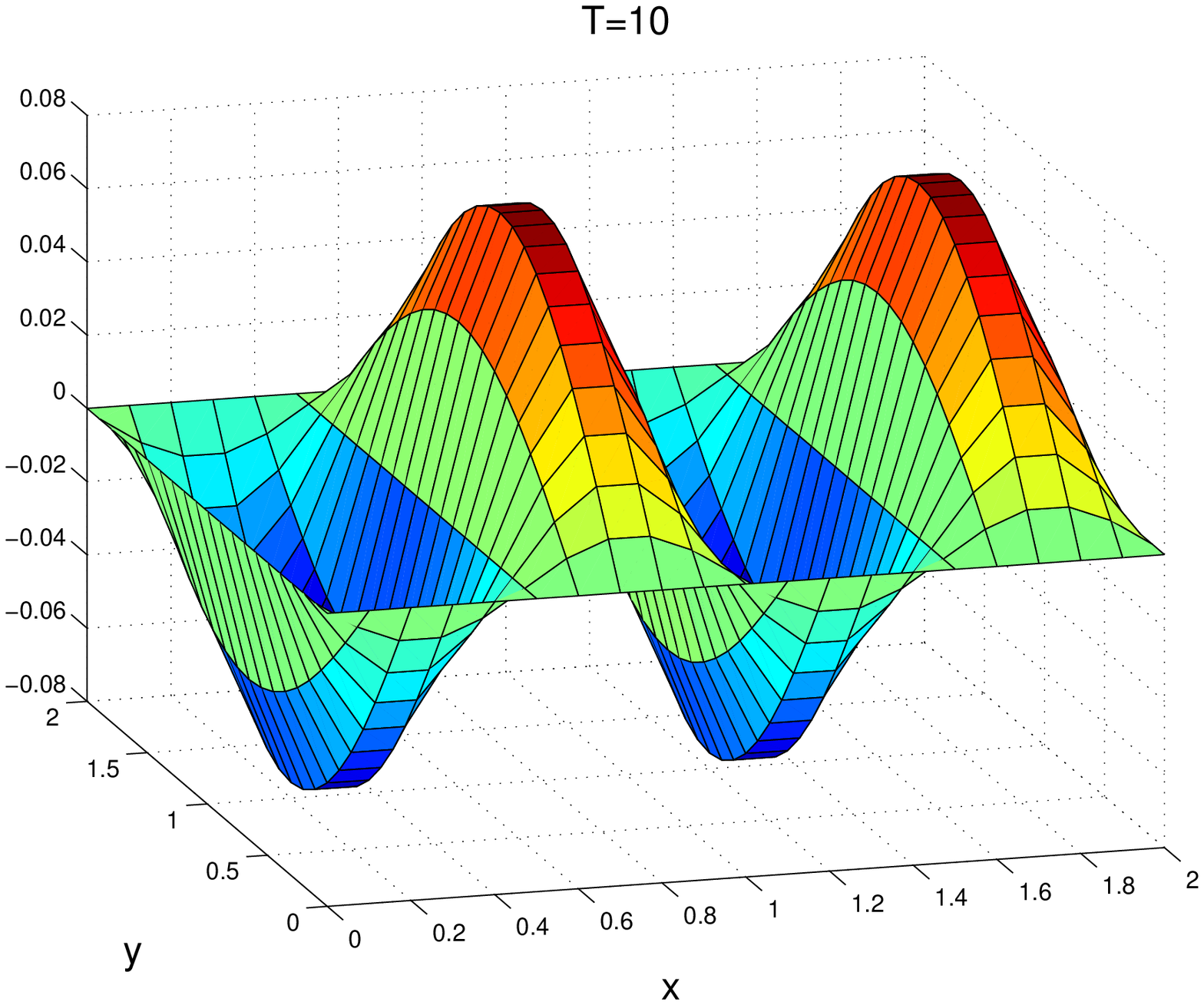}}
\resizebox{40mm}{40mm}{\includegraphics{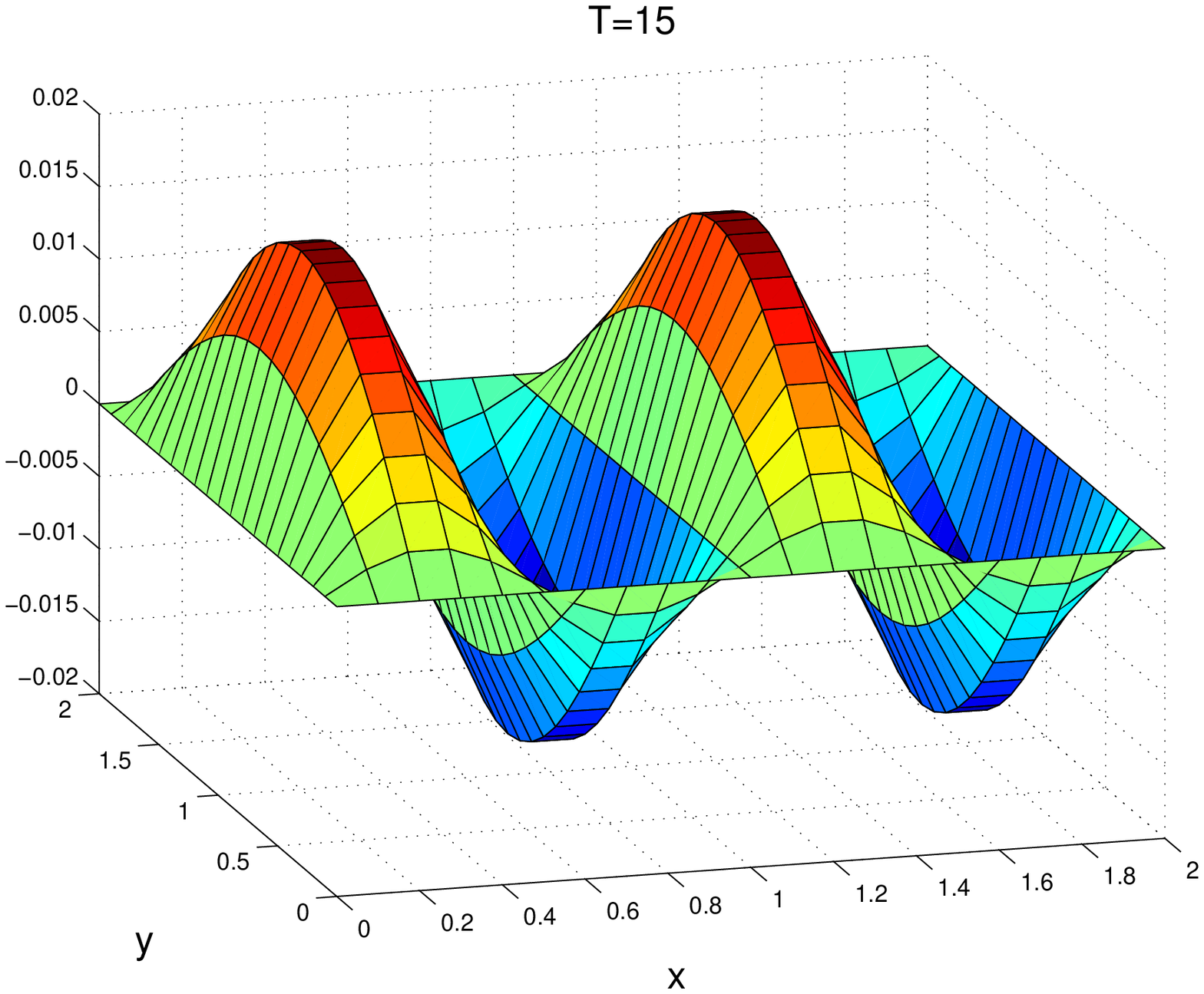}}
\resizebox{40mm}{40mm}{\includegraphics{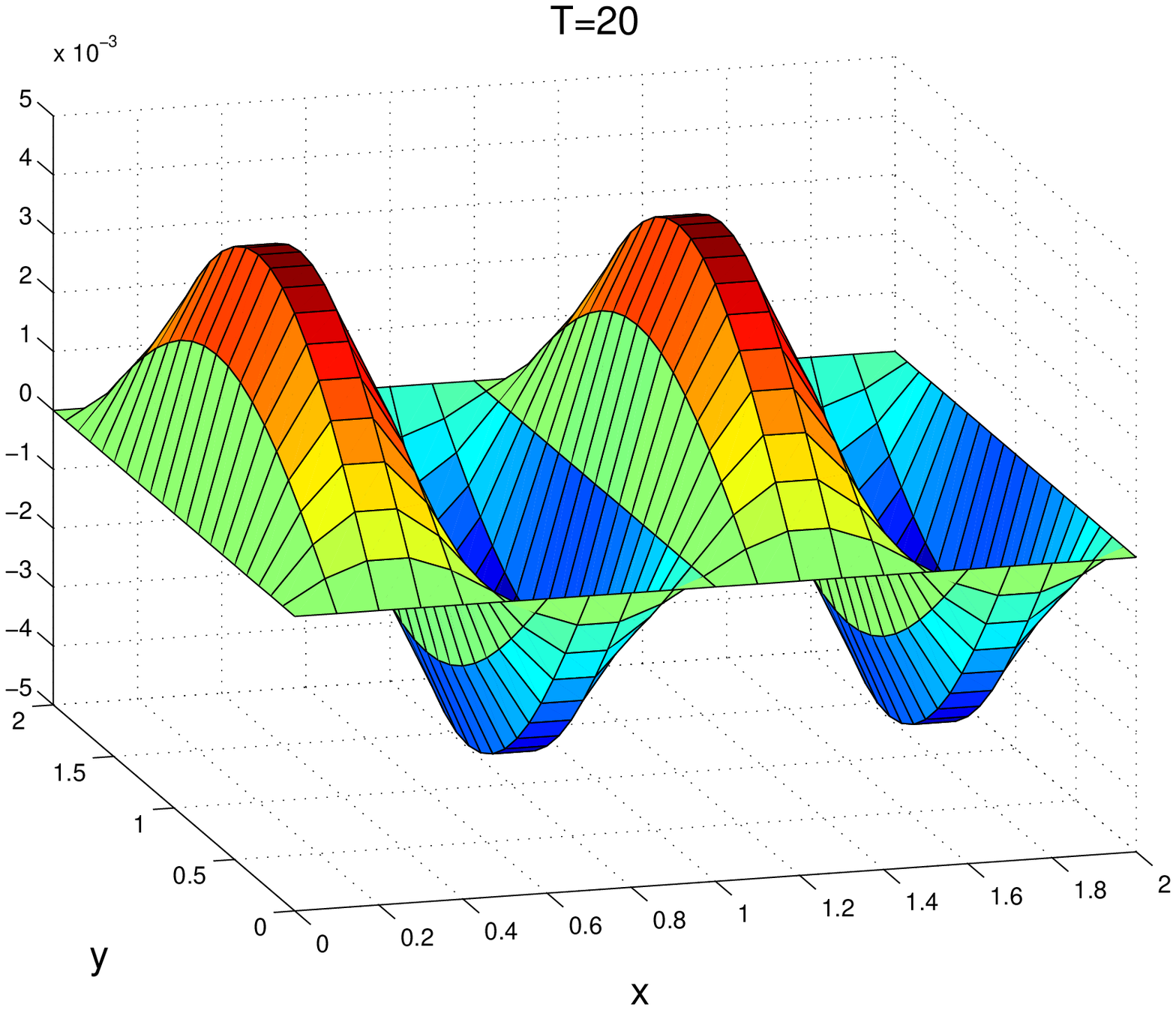}}
\resizebox{40mm}{40mm}{\includegraphics{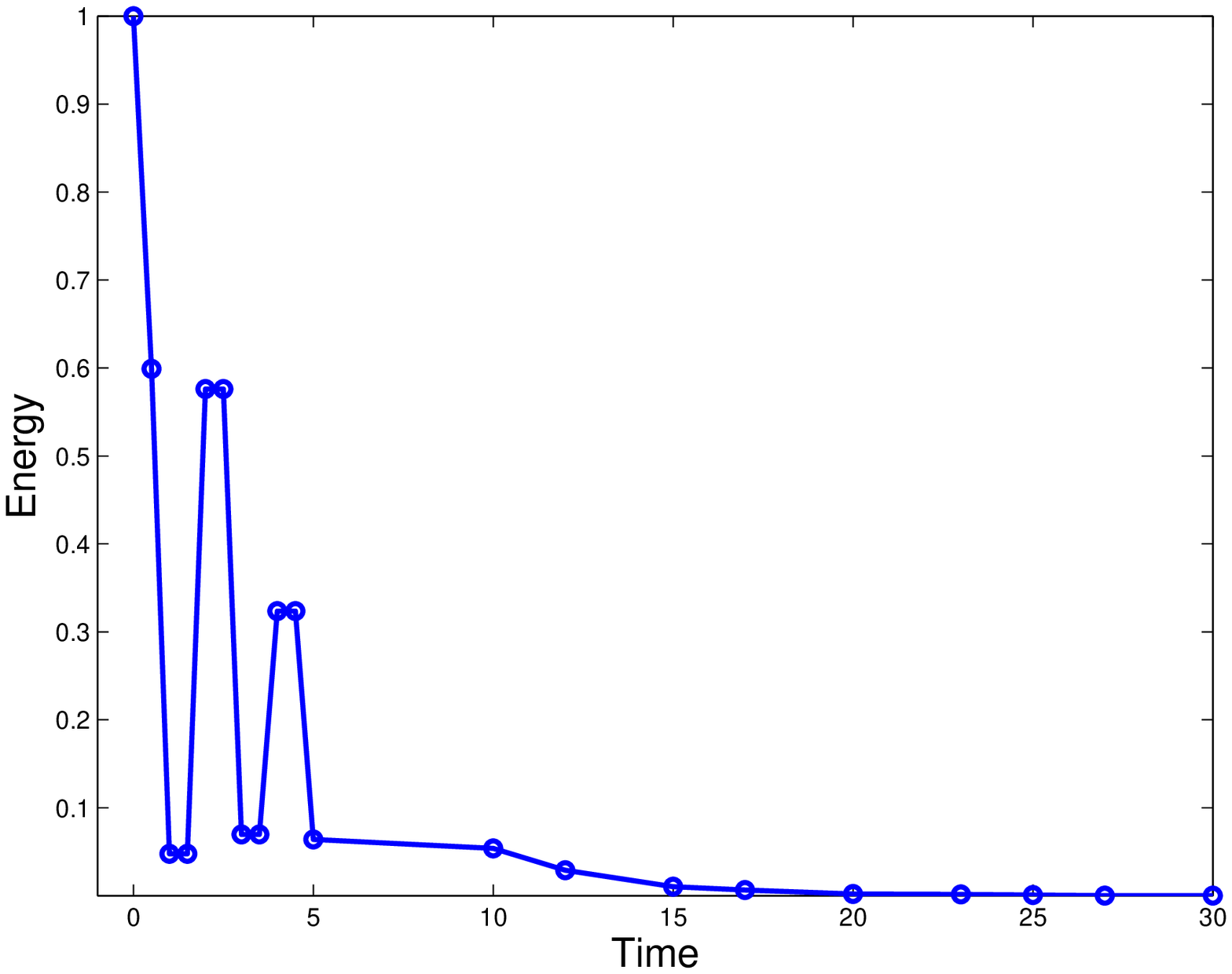}}
\caption{Numerical solution of  \eqref{Prob:CC} with $20$ collocation points on each time interval for time $T=0,5,10,15, 20$, and numerical energy evolution 
with respect to time (the last figure).}
\end{figure}


\section{Discussion and conclusion}\label{Sect4}
In this paper, we have shown that  the high-dimensional Cole-Cole model can be transformed into a temporal PIDE with weakly singular kernel through an adoption of a new variable and electric polarization.  Furthermore, by taking advantage of the special feature of the PIDE, we apply a domain separation technique to convert the equation into a set of ordinary integro-differential equations, and thus greatly reduce the computation cost of the original model.  Moreover, we have carefully exploited the singular behavior of solution of a typical ordinary integro-differential equation and designed a catered numerical algorithm for it. It is noteworthy that
to combat the singular integral in our algorithm, some technical mapped Gauss-Jacobi numerical quadrature seems indispensable.

Another aspect of our algorithm that needs investigation is its fast algorithm counterpart. Similar as the fast algorithm for weakly singular kernel integration \cite{LiR10} or Caputo fractional derivative \cite{JiangZZZ17}, a promising way is applying fast multipole method to find an accurate approximation for the Laplace transform of the kernel $e_{\alpha,\alpha}(-\lambda t^\alpha), 0<\alpha<1, \lambda>0$, or the function $1/(\lambda+s^\alpha)$. Runge's approximation theorem (cf. \cite[p. 61]{SteinS02}) assures the existence of such approximation. This will be our next research topic.

\end{document}